\documentclass[preprint, review, 12pt]{elsarticle}

\usepackage{amsthm}
\usepackage{amssymb}
\usepackage{amsmath}
\usepackage{amsfonts}
\usepackage{mathrsfs}
\usepackage{color}
\usepackage{microtype}
\usepackage{fancyhdr}
\usepackage{caption}
\usepackage{wrapfig}
\usepackage{cases}
\usepackage{setspace}
\usepackage{geometry}
\geometry{left=2.5cm, right=2.5cm, top=3cm, bottom=3cm}

\usepackage{tikz} 
\usetikzlibrary{positioning, shapes.geometric} 
\usepackage[colorlinks,linkcolor=blue,anchorcolor=blue,citecolor=blue,CJKbookmarks=True]{hyperref}
\usepackage[american]{babel}
\usepackage{microtype}
\usepackage{ragged2e}

\allowdisplaybreaks[4]
\biboptions{numbers,sort&compress}

\newtheorem{lemma}{Lemma}[section]
\newtheorem{theorem}{Theorem}[section]

\newtheorem{remark}{Remark}[section]
\newtheorem{proposition}{Proposition}[section]

\numberwithin{equation}{section}
\allowdisplaybreaks[4]

\newcommand{\<}{\langle}
\renewcommand{\>}{\rangle}

\newcommand{\hR}{\mathbb R}

\newcommand{\rd}{\mathrm d}

\newcommand{\cI}{\mathcal I}

\journal{Physica D:\ Nonlinear Phenomena}

\begin{document}

\pagenumbering{arabic}

\begin{spacing}{1}

\begin{frontmatter}

\title{Density functions for the overdamped generalized Langevin equation and its Euler--Maruyama method:\ smoothness and convergence}

\author[ad1]{Xinjie Dai}
\ead{dxj@ynu.edu.cn} 

\author[ad2]{Diancong Jin\corref{cor1}}
\ead{jindc@hust.edu.cn} 

\address[ad1]{School of Mathematics and Statistics, Yunnan University, Kunming 650504, China}

\address[ad2]{School of Mathematics and Statistics \& Hubei Key Laboratory of Engineering Modeling and Scientific Computing, Huazhong University of Science and Technology, Wuhan 430074, China}

\cortext[cor1]{Corresponding author}
	
\begin{abstract}
This paper focuses on studying the convergence rate of the density function of the Euler--Maruyama (EM) method, when applied to the overdamped generalized Langevin equation with fractional noise which serves as an important model in many fields. Firstly, we give an improved upper bound estimate for the total variation distance between random variables by their Malliavin--Sobolev norms. Secondly, we establish the existence and smoothness of the density function for both the exact solution and the numerical one. Based on the above results, the convergence rate of the density function of the numerical solution is obtained, which relies on the regularity of the noise and kernel. This convergence result provides a powerful support for numerically capturing the statistical information of the exact solution through the EM method. 
\end{abstract}	

\begin{keyword}
overdamped generalized Langevin equation \sep Euler--Maruyama method \sep density function \sep total variation distance \sep Malliavin calculus 
\end{keyword}	

\end{frontmatter}

\thispagestyle{plain}

\section{Introduction}

The generalized Langevin equation (GLE) provides a precise description of coarse-grained variable dynamics in reduced dimension models \cite{LeiBakerLi2016} and has numerous applications in scientific fields such as nanoscale biophysics \cite{Kou2008}, viscoelastic fluids \cite{DidierNguyen2022}, machine learning \cite{XieCarE2024}, and so on. For example, in statistical physics, the position $x (t)$ of a moving particle with mass $m$ in the energy potential $V$ at time $t$ can be modelled by the GLE
\begin{align*}
m \ddot x(t) = - \nabla V(x(t)) - \int_0^t K(t-s) \dot x(s) \mathrm d s + \eta(t). 
\end{align*}
According to the fluctuation-dissipation theorem, the convolutional kernel $K(t)$ of the friction (dissipation) and the random force (fluctuation) $\eta(t)$ are associated through the relation 
\begin{align*}
\mathbb E \big[ \eta(t) \eta(s) \big] = k_{B} T_A K(t-s), \quad \mbox{for } s \leq t,
\end{align*}
where $k_{B}$ and $T_A$ are Boltzmann's constant and the absolute temperature, respectively. 

To capture the ubiquitous memory phenomena in biology and physics, the fluctuation $\eta(t)$ is often characterized by the fractional noise, and then the fluctuation-dissipation theorem reveals that the memory kernel $K(t)$ is proportional to a power law $t^{-\alpha}$ with some $\alpha \in (0,1)$. In the overdamped regime ($m\ll 1$), the GLE with fractional noise can be represented as a stochastic Volterra equation with weakly singular kernels as follows \cite{LiLiu2017}:\ 
\begin{align}\label{eq.GLE}
x(t) = x_0 + \frac{1}{\Gamma(\alpha)}\int_0^{t} (t-s)^{\alpha-1} f(x(s)) \mathrm{d}s + \frac{\sigma}{\Gamma(\alpha)} \int_0^t (t-s)^{\alpha-1} \mathrm{d}W_H(s), \quad t \in [0, T]. 
\end{align}
Here, $W_H$ is a fractional Brownian motion (fBm) with Hurst index $H$ on some complete filtered probability space $(\Omega,\mathscr F,\{\mathscr F_t\}_{t\in[0,T]}, \mathbb P)$ satisfying the usual condition, $\Gamma$ denotes the Gamma function, and $\sigma \neq 0$ is a constant. We always assume that the drift function $f: \mathbb R\rightarrow \mathbb R$ is Lipschitz continuous and the initial value $x_0\in\mathbb R$ is deterministic. In this setting, Eq.\ \eqref{eq.GLE} admits a unique strong solution for $H \in (1/2,1)$ and $\alpha \in (1-H,1)$; see Theorem 1 of \cite{LiLiu2017}. We also refer to \cite{LiLiu2019} for the existence and uniqueness of the limiting measure of the solution to Eq.\ \eqref{eq.GLE}. 

The solution to Eq.\ \eqref{eq.GLE} is essentially constituted of a family $\{x(t)\}_{t\in[0,T]}$ of random variables. All probabilistic information, such as expectations of important functionals of $\{x(t)\}_{t\in[0,T]}$ can be fully characterized by its density function. In terms of the density function for the exact solution, the existence and smoothness results have been obtained for other types of stochastic Volterra equations \cite{BesaluMarquez2021, Fan2015, FriesenJin2024}, for example, the standard Brownian motion (i.e., $H = 1/2$) case and the non-singular kernel (i.e., $\alpha > 1$) case. However, to the best of our knowledge, the existence and smoothness of the density function of the solution to Eq.\ \eqref{eq.GLE} are still unexplored, which is our first motivation.
In addition, note that it is extremely difficult to obtain a closed-form density function when the density function exists.
It is meaningful and indispensable to use the numerical method to generate the approximation of the density function of $\{x(t)\}_{t\in[0,T]}$. In numerical aspect, we are only aware that \cite{VojtaSkinner2019, VojtaWarhover2021} use the Euler--Maruyama (EM) method to simulate the density function of the solution of underdamped GLEs with reflecting and absorbing walls, respectively. The numerical research on the density function for Eq.\ \eqref{eq.GLE} remains scarce. Thus, in this paper, we also aim to fill the gap on numerically approximating the density function for the overdamped GLE \eqref{eq.GLE}. 

Our main contributions in this paper are threefold as follows:
\begin{itemize}
\item We give an improved upper bound estimate for the total variation distance between random variables by their Malliavin--Sobolev norms, in Proposition \ref{prop.D12}.
	
\item We establish the existence and smoothness of the density function for both the exact solution to Eq.\ \eqref{eq.GLE} and the numerical solution of the EM method; see Theorems \ref{thm.ExactDensityExis}--\ref{thm.EulerDensitySmooth}.

\item We obtain the convergence rate of the density function of the numerical solution generated by the EM method for Eq.\ \eqref{eq.GLE}, which will be performed in Theorem \ref{thm.DensityConvergence}. 
\end{itemize}
In Figure \ref{fig.FrameworkDiagram}, the relationship between the main results of this paper is listed for readability. Note that some efforts have been done in \cite{DaiHongShengZhou2023, DaiXiao2021, FangLi2020} to establish strong error estimates for the EM method of Eq.\ \eqref{eq.GLE}. 

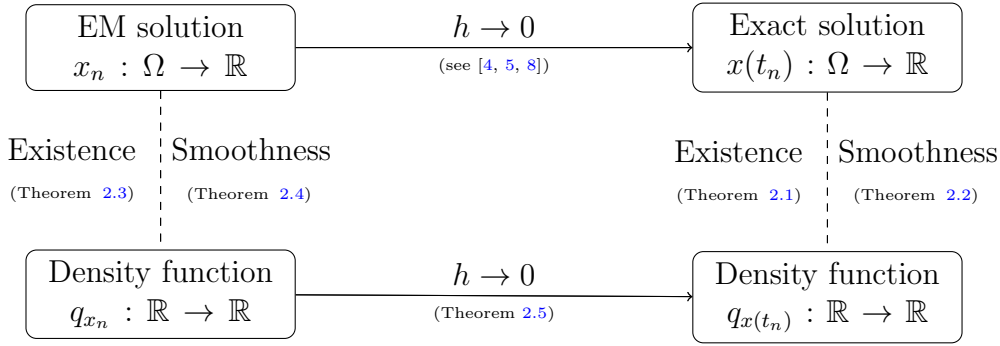
\begin{figure}[!ht]
\centering
\scalebox{1}{
\begin{tikzpicture}[node distance=60pt]
\node[draw, rounded corners, text width=8em, align=center] (UpperLeft) {EM solution $x_n: \Omega \rightarrow \hR$};

\node[draw, rounded corners, text width=8em, align=center, right=150pt of UpperLeft] (UpperRight) {Exact solution $x(t_n): \Omega \rightarrow \hR$};
\node[draw, rounded corners, text width=8em, align=center, below=of UpperLeft] (LowerLeft) {Density function $q_{x_n}: \hR \rightarrow \hR$};
\node[draw, rounded corners, text width=8em, align=center, below= of UpperRight] (LowerRight) {Density function $q_{x(t_n)}: \hR \rightarrow \hR$};

\draw[dashed] (UpperLeft) -- node[left, text width=5em, align=center] {Existence \tiny{(Theorem \ref{thm.EulerDensityExis})}} (LowerLeft);
\draw[dashed] (UpperLeft) -- node[right, text width=5em, align=center] {Smoothness \tiny{(Theorem \ref{thm.EulerDensitySmooth})}} (LowerLeft);

\draw[->] (UpperLeft) -- node[above] {$h \rightarrow 0$} (UpperRight);
\draw[->] (UpperLeft) -- node[below] {\tiny{(see \cite{DaiHongShengZhou2023, DaiXiao2021, FangLi2020})}} (UpperRight);

\draw[dashed] (UpperRight) -- node[left, text width=5em, align=center] {Existence \tiny{(Theorem \ref{thm.ExactDensityExis})}} (LowerRight);
\draw[dashed] (UpperRight) -- node[right, text width=5em, align=center] {Smoothness \tiny{(Theorem \ref{thm.ExactDensitySmooth})}} (LowerRight);

\draw[->] (LowerLeft) -- node[above] {$h \rightarrow 0$} (LowerRight);
\draw[->] (LowerLeft) -- node[below] {\tiny{(Theorem \ref{thm.DensityConvergence})}} (LowerRight);
\end{tikzpicture}
}
\caption{The relationship between the main results of this paper. Here, the convergence of $x_n$ and $q_{x_n}$ as $h \rightarrow 0$ is uniform for all $n \in \{1,2,\cdots,N\}$.} 
\label{fig.FrameworkDiagram}
\end{figure}

The rest of the paper is organized as follows. Section \ref{sec2} introduces the preliminaries including the Malliavin calculus and states the main results of this paper. Section \ref{sec3} provides several auxiliary lemmas. Section \ref{sec4} shows the detailed proofs of these theoretical findings. Throughout this paper, unless otherwise specified, we use the following notations. For the integer $m \geq 1$, denote by $C_b^m$ the space of not necessarily bounded real-valued functions that have continuous and bounded derivatives up to order $m$, and by $C_{b}^{\infty}$ the space of real-valued smooth functions whose all derivatives are bounded. We use $C$ as a generic constant and use $C(\cdot)$ if necessary to mention the parameters it depends on, whose values are always independent of the time step size $h$ and may differ at different occurrences.

\section{Preliminaries and main results}
\label{sec2}

\subsection{Preliminaries}

Let us first introduce some basic definitions and Malliavin calculus with respect to the fBm; see \cite{HongHuang2020, Nualart2006} for more details. When $H \in (\frac{1}{2},1)$, the covariance function of fBm $W_H$ satisfies 
\begin{align*}
{\rm Cov}(t,s) = H(2H-1) \int_{0}^{t}\int_{0}^{s} |u-v|^{2H-2} \mathrm{d}u \mathrm{d}v \quad \forall\, s,t \in [0,T].
\end{align*}
Denote by $\mathcal{E}$ the set of real-valued step functions on $[0,T]$ and let $\mathcal H$ be the Hilbert space defined as the closure of $\mathcal E$ with respect to the scalar product $\langle \mathbf{1}_{[0,t]}, \mathbf{1}_{[0,s]} \rangle_{\mathcal{H}} := {\rm Cov}(t,s)$. The mapping $\mathbf 1_{[0,t]}\mapsto W_H(t)$ can be extended to an isometry between $\mathcal{H}$ and a closed subspace of $L^2(\Omega;\mathbb{R})$. More precisely, denote this isometry by $\varphi \mapsto W_H(\varphi)$, then
\begin{align*}
\langle \varphi, \phi \rangle_{\mathcal{H}} = H(2H-1) \int_{0}^{T}\int_{0}^{T} \varphi(u) \phi(v) |u-v|^{2H-2} \mathrm{d}u \mathrm{d}v = \mathbb{E}\Big[ W_H(\varphi) W_H(\phi) \Big] \quad \forall\, \varphi,\phi \in \mathcal{H}.
\end{align*}

In fact, the covariance function also admits 
\begin{align*}
{\rm Cov}(t,s) = \int_{0}^{t \wedge s} \Phi_H (t,u) \Phi_H (s,u) \mathrm{d}u 
\end{align*}
with the kernel function
\begin{align*}
\Phi_H (t,s) := c_H s^{\frac12-H}\int_{s}^{t} (v-s)^{H-\frac32} v^{H-\frac12} \mathrm{d}v \, \mathbf 1_{\{s<t\}}. 
\end{align*}
Here, $c_H := \sqrt{\frac{H(2H-1)}{B(2-2H,H-\frac12)}}$ and $B(a, b) := \int_0^1 u^{a-1} (1-u)^{b-1} \mathrm{d}u$ with $a, b>0$ is the Beta function. Define the operator $K^*$ from $\mathcal{H}$ to $L^2([0,T];\mathbb{R})$ by
\begin{align*}
(K^*\varphi)(s):=\int_{s}^{T}\varphi(t) \frac{\partial}{\partial t} \Phi_H (t,s) \mathrm{d}t, \quad \varphi \in \mathcal{H}.
\end{align*}
Then, for any $\varphi,\, \phi \in \mathcal{H}$, 
\begin{align*}
\langle \varphi, \phi \rangle_{\mathcal{H}} = \langle K^*\varphi, K^*\phi \rangle_{L^2([0,T];\mathbb{R})}, 
\end{align*}
which implies that $K^*$ provides another isometry between $\mathcal{H}$ and a closed subspace of $L^2([0,T];\mathbb{R})$. 

Denote by $\mathcal{S}$ the class of smooth real-valued random variables such that $F \in \mathcal{S}$ has the form
\begin{align*}
F = g(W_H(\varphi_1),\ldots,W_H(\varphi_n)),
\end{align*}
where $g \in C_p^\infty(\mathbb{R}^n;\mathbb{R})$, $\varphi_i \in \mathcal{H}, \, i=1,\ldots,n,\, n \in \mathbb{N_+}$. Here, $C_p^\infty(\mathbb{R}^n;\mathbb{R})$ is the space of all real-valued smooth functions on $\mathbb{R}^n$ with all its partial derivatives growing polynomially. The Malliavin derivative with respect to the fBm of $F \in \mathcal{S}$ is an $\mathcal{H}$-valued random variable defined by
\begin{align*}
D F = \sum_{i=1}^n \frac{\partial}{\partial x_i} g(W_H(\varphi_1),\ldots,W_H(\varphi_n)) \varphi_i. 
\end{align*}
For any $p \geq1$, we denote the domain of $D$ in $L^p(\Omega;\mathbb{R})$ by $\mathbb{D}^{1,p}$, meaning that $\mathbb{D}^{1,p}$ is the closure of $\mathcal{S}$ with respect to the norm
\begin{align*}
\|F\|_{\mathbb{D}^{1,p}} = \Big( \mathbb{E} \big[ |F|^p \big] + \mathbb{E} \big[ \|DF\|_{\mathcal{H}}^p \big] \Big)^{\frac{1}{p}}.
\end{align*}
In a similar manner, for $F \in \mathcal{S}$, the iterated derivative $D^k F$ ($k \in \mathbb{N}_+$) is defined as a random variable with values in $\mathcal{H}^{\otimes k}$. For every $p \geq 1$ and $k \in \mathbb{N}_+$, denote by $\mathbb{D}^{k,p}$ the completion of $\mathcal{S}$ with respect to the norm
\begin{align*}
\|F\|_{\mathbb{D}^{k,p}} = \left( \mathbb{E}\Big[ |F|^p + \sum_{j=1}^{k} \|D^jF\|_{\mathcal{H}^{\otimes j}}^p \Big] \right)^{\frac{1}{p}}.
\end{align*}
We also denote $\mathbb{D}^{k,\infty} := \bigcap_{p \in [1,\infty)} \mathbb{D}^{k,p}$ and $\mathbb{D}^{\infty} := \bigcap_{k \geq 1} \mathbb{D}^{k,\infty}$ for simplicity. 

We proceed to introduce the adjoint operator $\delta$ of the derivative operator $D$, which is also known as the Skorohod integral. If an $\mathcal{H}$-valued random variable $\varphi\in L^2(\Omega;\mathcal{H})$ satisfies
\begin{align*}
\big| \mathbb{E}[\langle \varphi, DF \rangle_{\mathcal{H}}] \big| \leq C(\varphi) \| F \|_{L^2(\Omega;\mathbb{R})} \quad \forall\, F\in \mathbb{D}^{1,2},
\end{align*}
then $\varphi\in {\rm Dom}(\delta)$ and $\delta(\varphi)\in L^2(\Omega;\mathbb{R})$ is characterized by the dual formula
\begin{align} \label{eq.dualformu}
\mathbb{E}[ \langle \varphi, DF \rangle_{\mathcal{H}}] = \mathbb{E}[F\delta(\varphi)] \quad \forall\, F\in \mathbb{D}^{1,2}.
\end{align}
In particular, when $\varphi \in \mathcal{H}$ is deterministic, the Skorohod integral $\delta(\varphi)$ coincides with the Riemann--Stieltjes integral $\int_0^T \varphi(u) \mathrm{d} W_H(u)$. 

In what follows, by strengthening the conditions of \cite[Theorem 4.2]{NourdinPoly2013}, we obtain an improved upper bound for the total variation distance of random variables by their Malliavin--Sobolev norms, which plays a key role in proving Theorem \ref{thm.DensityConvergence}. 

\begin{proposition} \label{prop.D12}
Let $\{F_n\}_{n\ge1}$ be a sequence in $\mathbb{D}^{1,2}$ and $F_\infty\in\mathbb{D}^{2,4}$ with $\|DF_\infty\|_{\mathcal{H}} \geq c$ a.s.\ for some constant $c > 0$. If the laws of $\{F_n\}_{n\ge1}$ and $F_\infty$ are absolutely continuous with respect to the Lebesgue measure on $\mathbb{R}$, then the total variation distance between the laws of $F_{n}$ and $F_{\infty}$ satisfies
\begin{align*}
d_{\mathrm{TV}} ( F_{n}, F_{\infty} ) \leq C \left\| F_{n} - F_{\infty} \right\|_{\mathbb{D}^{1,2}} \quad \forall\, n \geq 1, 
\end{align*}
where the constant $C>0$ is independent of $n$. 
\end{proposition}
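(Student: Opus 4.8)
The plan is to reduce to smooth test functions and then run a single Malliavin integration by parts against a divergence direction built solely from $F_\infty$, correcting the resulting error by an integration by parts in an auxiliary interpolation parameter. Since both laws are absolutely continuous, $d_{\mathrm{TV}}(F_n,F_\infty)=\frac12\sup\{|\mathbb{E}[\phi(F_n)]-\mathbb{E}[\phi(F_\infty)]|:\phi\in C_c^\infty(\mathbb{R}),\ \|\phi\|_\infty\le1\}$, so it suffices to bound $\mathbb{E}[\phi(F_n)]-\mathbb{E}[\phi(F_\infty)]$ by $C\|F_n-F_\infty\|_{\mathbb{D}^{1,2}}$ uniformly in such $\phi$. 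Writing $G:=F_n-F_\infty$ and $F_t:=F_\infty+tG=(1-t)F_\infty+tF_n\in\mathbb{D}^{1,2}$ for $t\in[0,1]$, the starting identity is
\begin{align*}
\mathbb{E}[\phi(F_n)]-\mathbb{E}[\phi(F_\infty)]=\int_0^1\mathbb{E}\big[\phi'(F_t)G\big]\,\mathrm{d}t.
\end{align*}

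The decisive structural constraint is that $F_n$ only belongs to $\mathbb{D}^{1,2}$, so I may differentiate it at most once; in particular the naive choice of integrating by parts along $DF_t/\|DF_t\|_{\mathcal{H}}^2$ is forbidden, both because $F_t$ need not be non-degenerate and because the divergence weight would involve $D^2F_n$. Instead I integrate by parts along the fixed direction $u_\infty:=DF_\infty/\|DF_\infty\|_{\mathcal{H}}^2$, which is well defined with $\|u_\infty\|_{\mathcal{H}}\le 1/c$ almost surely thanks to $\|DF_\infty\|_{\mathcal{H}}\ge c$. From $D\phi(F_t)=\phi'(F_t)DF_t$ and $\langle DF_\infty,u_\infty\rangle_{\mathcal{H}}=1$ one gets $\langle D\phi(F_t),u_\infty\rangle_{\mathcal{H}}=\phi'(F_t)(1+tX)$, where $X:=\langle DG,u_\infty\rangle_{\mathcal{H}}$; multiplying by $G$ and invoking the duality formula \eqref{eq.dualformu} yields
\begin{align*}
\mathbb{E}\big[\phi'(F_t)G\big]=\mathbb{E}\big[\phi(F_t)\,\delta(Gu_\infty)\big]-t\,\mathbb{E}\big[\phi'(F_t)\,GX\big].
\end{align*}

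The residual term still carries the unbounded factor $\phi'$, and a second Malliavin integration by parts is unavailable because it would require $D^2G$. The point that closes the argument is that $X$ does not depend on $t$ and $\tfrac{\mathrm{d}}{\mathrm{d}t}F_t=G$, so $\mathbb{E}[\phi'(F_t)GX]=\tfrac{\mathrm{d}}{\mathrm{d}t}\mathbb{E}[\phi(F_t)X]$; integrating by parts in $t$ gives $\int_0^1 t\,\mathbb{E}[\phi'(F_t)GX]\,\mathrm{d}t=\mathbb{E}[\phi(F_n)X]-\int_0^1\mathbb{E}[\phi(F_t)X]\,\mathrm{d}t$, in which $\phi$ is no longer differentiated. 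Substituting back expresses $\mathbb{E}[\phi(F_n)]-\mathbb{E}[\phi(F_\infty)]$ as a sum of three terms, each of the form $\mathbb{E}[\phi(\cdot)\,W]$ with $\|\phi\|_\infty\le1$ and Malliavin weight $W\in\{\delta(Gu_\infty),X\}$, so the whole quantity is bounded by $\|\delta(Gu_\infty)\|_{L^1(\Omega)}+2\|X\|_{L^1(\Omega)}$.

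It remains to control the two weights in $L^1(\Omega)$ by $\|G\|_{\mathbb{D}^{1,2}}$. The bound $\|X\|_{L^1}\le c^{-1}\mathbb{E}\|DG\|_{\mathcal{H}}\le c^{-1}\|G\|_{\mathbb{D}^{1,2}}$ is immediate. For $\delta(Gu_\infty)=G\,\delta(u_\infty)-\langle DG,u_\infty\rangle_{\mathcal{H}}$ I would use the Meyer inequality $\|\delta(Gu_\infty)\|_{L^1}\le\|\delta(Gu_\infty)\|_{L^{4/3}}\le C\|Gu_\infty\|_{\mathbb{D}^{1,4/3}(\mathcal{H})}$; here the hypothesis $F_\infty\in\mathbb{D}^{2,4}$ is exactly calibrated, since a direct computation using $\|DF_\infty\|_{\mathcal{H}}\ge c$ gives $\|Du_\infty\|_{\mathcal{H}\otimes\mathcal{H}}\le 3c^{-2}\|D^2F_\infty\|_{\mathcal{H}\otimes\mathcal{H}}\in L^4(\Omega)$, and H\"older's inequality ($L^2\cdot L^4\subset L^{4/3}$) then controls the term $G\,Du_\infty$ using only $G\in L^2$, producing $\|Gu_\infty\|_{\mathbb{D}^{1,4/3}(\mathcal{H})}\le C(c,\|F_\infty\|_{\mathbb{D}^{2,4}})\|G\|_{\mathbb{D}^{1,2}}$. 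The duality and divergence identities above are first justified for $F_n$ ranging over a dense subclass of $\mathbb{D}^{1,2}$ with higher integrability (e.g.\ smooth cylindrical variables) and then extended to arbitrary $F_n\in\mathbb{D}^{1,2}$ by approximation, the bounding constant depending only on $c$ and $\|F_\infty\|_{\mathbb{D}^{2,4}}$ and not on $n$. Taking the supremum over $\phi$ gives the claim. The main obstacle is precisely the unavailability of a second Malliavin integration by parts on the $F_n$-side; circumventing it by integrating by parts in the deterministic parameter $t$, together with the sharp exponent bookkeeping made possible by $F_\infty\in\mathbb{D}^{2,4}$, is what lets a single derivative of $F_n$ suffice.
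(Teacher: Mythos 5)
Your proof is correct, but it takes a genuinely different route from the paper's. Both arguments pivot on the same non-degenerate direction $u_\infty = DF_\infty/\|DF_\infty\|_{\mathcal H}^2$ and consume the hypotheses in the same way ($\|DF_\infty\|_{\mathcal H}\ge c$ to divide, $D^2F_\infty\in L^4(\Omega)$ to differentiate $u_\infty$), but the device that lets a single Malliavin derivative of $F_n$ suffice is different. The paper follows Nourdin--Poly: it applies the chain rule to the random antiderivative $\int_{F_\infty}^{F_n}\mathbf 1_A(x)\,\mathrm dx$ of an indicator of a Borel set --- a step legitimized precisely by the absolute continuity of both laws (Proposition 2.3.8 of Nourdin--Peccati) --- so that $\mathbf 1_A(F_n)-\mathbf 1_A(F_\infty)$ appears directly after projecting onto $u_\infty$, together with two divergence-type correction terms; each of the three resulting terms is bounded in $L^2$ via the dual formula, with no interpolation, no Meyer inequality, and no approximation argument. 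You instead never differentiate anything non-smooth: you test against $\phi\in C_c^\infty$, interpolate $F_t=(1-t)F_\infty+tF_n$, perform one Bismut-type integration by parts along $u_\infty$, and eliminate the residual term $t\,\mathbb E[\phi'(F_t)GX]$ --- the counterpart of the paper's correction terms --- by integrating by parts in the deterministic parameter $t$, which is the genuinely novel step relative to the paper. The price is heavier machinery downstream: Meyer's inequality at exponent $4/3$ combined with the H\"older pairing $L^2\cdot L^4\subset L^{4/3}$ to control $\|\delta(Gu_\infty)\|_{L^1(\Omega)}$, and a density/approximation argument over smooth cylindrical $F_n$ to justify the duality pairing when $F_n$ lies only in $\mathbb D^{1,2}$; your exponent bookkeeping there is right, and since the Meyer bound involves only $\|G\|_{\mathbb D^{1,2}}$, $c$ and $\|F_\infty\|_{\mathbb D^{2,4}}$, the constant is uniform and the limit passage is legitimate. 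What your route buys is that only smooth functions ever go through the chain rule, and absolute continuity is relegated to a peripheral role (identifying $d_{\mathrm{TV}}$ with a supremum over smooth test functions) instead of entering the core identity; what the paper's route buys is brevity --- granted the Lipschitz chain rule, it is a one-shot identity staying entirely within the $L^2$ duality framework.
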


\begin{proof}
Let $A$ be a bounded Borel set of $\mathbb{R}$. For convenience, define
\begin{align*}
\phi(F_n,F_\infty) := \int_{F_\infty}^{F_n} \textbf{1}_A(x) \mathrm{d}x. 
\end{align*}
Then, it follows from the chain rule (see e.g., Proposition 2.3.8 of \cite{NourdinPeccati2012}) that
\begin{align*}
D\phi(F_n,F_\infty) = \textbf{1}_A(F_n) DF_n - \textbf{1}_A(F_\infty) DF_\infty, 
\end{align*}
which implies 
\begin{align} \label{eq.DL1_1}
&\quad\ \mathbb{E} \left[ \frac{\langle D \phi(F_n,F_\infty), DF_\infty \rangle_{\mathcal{H}} }{\|DF_\infty\|^2_{\mathcal{H}} }\right] \nonumber\\
&= \mathbb{E} \left[ \frac{\langle\textbf{1}_A(F_n) (DF_n-DF_\infty), DF_\infty\rangle_{\mathcal{H}} }{\|DF_\infty\|^2_{\mathcal{H}} } \right]
+\mathbb{E} \left[ \frac{\langle (\textbf{1}_A(F_n) - \textbf{1}_A(F_\infty)) DF_\infty, DF_\infty \rangle_{\mathcal{H}} }{\|DF_\infty\|^2_{\mathcal{H}} } \right] \nonumber\\
&= \mathbb{E} \left[ \frac{\langle \textbf{1}_A(F_n) (DF_n-DF_\infty), DF_\infty\rangle_{\mathcal{H}} }{\|DF_\infty\|^2_{\mathcal{H}} } \right]
 + \mathbb{E} \left[ \textbf{1}_A(F_n) - \textbf{1}_A(F_\infty) \right].
\end{align}
By Proposition 1.3.3 of \cite{Nualart2006}, one also has 
\begin{align} \label{eq.DL1_2}
\left\langle D \phi(F_n,F_\infty), D F_{\infty} \right\rangle_{\mathcal{H}} 
&= - \delta \left( \phi(F_n,F_\infty) D F_{\infty} \right) + \delta \left( D F_{\infty} \right) \phi(F_n,F_\infty).
\end{align}
Thus, it follows from \eqref{eq.DL1_1}, \eqref{eq.DL1_2} and the triangle inequality that 
\begin{align}\label{DL2}
\left| \mathbb{E}\left[\textbf{1}_A(F_n)-\textbf{1}_A(F_\infty)\right]\right| 
&\leq \left| -\mathbb{E}\left[\frac{ \delta \left( \phi(F_n,F_\infty) D F_{\infty} \right) }{\|DF_\infty\|^2_{\mathcal{H}}}\right]\right|+\left|\mathbb{E}\left[\frac{\delta \left( D F_{\infty} \right) \phi(F_n,F_\infty) }{\|DF_\infty\|^2_{\mathcal{H}}}\right]\right| \nonumber\\
&\quad\, + \left|-\mathbb{E}\left[\frac{ \langle \textbf{1}_A(F_n) (DF_n-DF_\infty), DF_\infty \rangle_{\mathcal{H}}}{\|DF_\infty\|^2_{\mathcal{H}}}\right]\right|.
\end{align}
Using the dual formula \eqref{eq.dualformu}, H\"older inequality, $\|DF_\infty\|^{-1}_{\mathcal{H}} \leq c^{-1}$ and $F_\infty \in \mathbb{D}^{2,4}$ shows 
\begin{align*}
\left|-\mathbb{E}\left[\frac{\delta\left(\phi(F_n,F_\infty) D F_{\infty} \right)}{\|DF_\infty\|^2_{\mathcal{H}}}\right]\right| 
&= \left| \mathbb{E} \left[ \left\< D \frac{1}{\| DF_\infty \|_{\mathcal{H}}^2}, D F_{\infty} \phi(F_n,F_\infty) \right\>_{\mathcal{H}} \right]\right| \\ 
&= \left|\mathbb{E}\left[\frac{2\left \langle D^2 F_{\infty}, D F_{\infty} \otimes D F_{\infty}\right\rangle_{\mathcal{H}^{\otimes 2}}}{\left\|D F_{\infty}\right\|_{\mathcal{H}}^{4}} \int_{F_{\infty}}^{F_{n}} \mathbf{1}_{A}(x) \mathrm{d} x\right]\right| \\
&\leq C\|F_n-F_\infty\|_{L^2(\Omega)}. 
\end{align*}
Similarly, the second term in the right hand of $\eqref{DL2}$ is also bounded by $C\|F_n-F_\infty\|_{L^2(\Omega)}$, while the third one is bounded by $C\|DF_n-DF_\infty\|_{L^2(\Omega;\mathcal{H})}$. Therefore, 
\begin{align*}
d_{\mathrm{TV}} ( F_{n}, F_{\infty} ) 
&= \sup_{A\in\mathcal{B}(\mathbb{R})} \left|P\left(F_{n} \in A\right) - P\left(F_{\infty} \in A\right)\right| \\
&= \sup_{A\in\mathcal{B}(\mathbb{R})} \left|\mathbb{E} \left[\textbf{1}_A(F_n)-\textbf{1}_A(F_\infty) \right] \right|\\ 
&\leq C \left\| F_{n} - F_{\infty} \right\|_{\mathbb{D}^{1,2}}, 
\end{align*}
which completes the proof. 
\end{proof}

\subsection{Main results}

Now, we fix $H \in (1/2, 1)$ and $\alpha \in (1-H,1)$ for the well-posedness of Eq.\ \eqref{eq.GLE}; see Theorem 1 of \cite{LiLiu2017}. We first present the existence and smooth of the density function of the exact solution to Eq.\ \eqref{eq.GLE}, respectively, as stated in Theorems \ref{thm.ExactDensityExis} and \ref{thm.ExactDensitySmooth}. 

\begin{theorem} \label{thm.ExactDensityExis}
Let $t \in (0,T]$ and $f \in C_{b}^{1}$. Then the law of the exact solution $x(t)$ is absolutely continuous with respect to the Lebesgue measure on $\mathbb{R}$.
\end{theorem}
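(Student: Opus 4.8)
The plan is to invoke the Bouleau--Hirsch absolute-continuity criterion (see Theorem 2.1.3 of \cite{Nualart2006}): if a random variable $F$ belongs to $\mathbb{D}^{1,2}$ and satisfies $\|DF\|_{\mathcal{H}}>0$ almost surely, then the law of $F$ is absolutely continuous with respect to the Lebesgue measure on $\mathbb{R}$. The whole argument therefore reduces to two tasks: (i) showing that the solution is Malliavin differentiable, i.e.\ $x(t)\in\mathbb{D}^{1,2}$; and (ii) establishing the nondegeneracy $\|Dx(t)\|_{\mathcal{H}}>0$ a.s.

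For task (i) I would use the standard Picard approximation scheme, setting $x^{(0)}(t)\equiv x_0$ and iterating Eq.\ \eqref{eq.GLE}. The stochastic convolution is the Wiener integral $W_H(\psi_t)$ with deterministic kernel $\psi_t(s):=\frac{\sigma}{\Gamma(\alpha)}(t-s)^{\alpha-1}\mathbf 1_{[0,t]}(s)$; note that $\psi_t\in L^{1/H}([0,T])\subset\mathcal{H}$ precisely because $\alpha>1-H$, so its Malliavin derivative is the deterministic element $\psi_t\in\mathcal{H}$. Since $f\in C_b^1$ makes the drift composition differentiable with bounded derivative, an induction shows that each $x^{(k)}(t)\in\mathbb{D}^{1,p}$ for every $p\ge2$ with Malliavin norms bounded uniformly in $k$; the closedness of $D$ then yields $x(t)\in\mathbb{D}^{1,2}$ (in fact $\mathbb{D}^{1,\infty}$). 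Differentiating Eq.\ \eqref{eq.GLE} and using causality ($D_r x(s)=0$ for $r>s$) gives the linear weakly singular Volterra equation
\begin{align*}
D_r x(t) = \frac{\sigma}{\Gamma(\alpha)}(t-r)^{\alpha-1} + \frac{1}{\Gamma(\alpha)}\int_r^t (t-s)^{\alpha-1} f'(x(s))\, D_r x(s)\, \mathrm{d}s, \qquad 0\le r\le t.
\end{align*}

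For task (ii) the key observation is that the inhomogeneous term $\frac{\sigma}{\Gamma(\alpha)}(t-r)^{\alpha-1}$ is deterministic, of one sign, and blows up like $(t-r)^{\alpha-1}$ as $r\uparrow t$ since $\alpha-1\in(-1,0)$. A weakly singular Gronwall/resolvent estimate (using $\|f'\|_\infty<\infty$) first gives the pathwise bound $|D_r x(t)|\le C(t-r)^{\alpha-1}$. Feeding this back into the integral remainder yields
\begin{align*}
\Big|\frac{1}{\Gamma(\alpha)}\int_r^t (t-s)^{\alpha-1} f'(x(s))\, D_r x(s)\, \mathrm{d}s\Big| \le C\int_r^t (t-s)^{\alpha-1}(s-r)^{\alpha-1}\,\mathrm{d}s = C\,B(\alpha,\alpha)\,(t-r)^{2\alpha-1},
\end{align*}
which is of strictly higher order than $(t-r)^{\alpha-1}$ as $r\uparrow t$ because $2\alpha-1>\alpha-1$. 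Hence $D_r x(t)\sim \frac{\sigma}{\Gamma(\alpha)}(t-r)^{\alpha-1}$ for $r$ close to $t$, so the continuous map $r\mapsto D_r x(t)$ is nonzero (and of a fixed sign) on a neighborhood of $t$ and in particular does not vanish identically; since $\sigma\neq0$ and the bilinear form $\langle\cdot,\cdot\rangle_{\mathcal{H}}$ is positive definite on functions for $H\in(1/2,1)$ (equivalently $\|\cdot\|_{\mathcal{H}}=\|K^*\cdot\|_{L^2}$ with $K^*$ injective), this forces $\|Dx(t)\|_{\mathcal{H}}>0$ a.s. Combining (i) and (ii) with the Bouleau--Hirsch criterion delivers the claim.

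The main obstacle is task (ii): one must guarantee that the singular leading term genuinely dominates the nonlinear feedback, which hinges on the pathwise a priori singularity estimate $|D_r x(t)|\le C(t-r)^{\alpha-1}$ for the solution of the linear Volterra equation, and on transferring the non-vanishing of the \emph{function} $r\mapsto D_r x(t)$ to strict positivity of its $\mathcal{H}$-\emph{norm}. The first point I would isolate as a weakly singular Gronwall lemma in Section~\ref{sec3}, and the second follows from the positive definiteness of the kernel $|u-v|^{2H-2}$ for $H\in(1/2,1)$. By contrast, the Malliavin differentiability in task (i) is technical but routine, and I expect the associated uniform Malliavin estimates for the Picard iterates to be recorded as auxiliary lemmas as well.
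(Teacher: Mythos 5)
Your proposal is correct, and its skeleton (Malliavin differentiability via Picard iteration, the linear weakly singular Volterra equation for $D_r x(t)$, the pathwise bound $|D_r x(t)| \le C(t-r)^{\alpha-1}$, then a density criterion from \cite{Nualart2006}) matches the paper's; the genuine difference lies in how nondegeneracy of $\|Dx(t)\|_{\mathcal{H}}$ is established. The paper works quantitatively in the image of the isometry $K^*$: it restricts $\|K^* Dx(t)\|_{L^2}^2$ to a window $[t-\epsilon,t]$, splits via $(a+b)^2 \ge \tfrac12 a^2 - b^2$ into a noise term bounded below by $C_1\epsilon^{2\alpha+2H-2}$ and a drift remainder bounded above by $C_2\epsilon^{4\alpha+2H-2}$, and tunes $\epsilon$, obtaining the deterministic uniform bound $\|Dx(t)\|_{\mathcal{H}}^2 \ge C_0 > 0$ of \eqref{eq.DxPositive}. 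You run the same exponent comparison ($\alpha-1$ versus $2\alpha-1$) pointwise in $r$ instead: near $r=t$ the deterministic singular term dominates the $O((t-r)^{2\alpha-1})$ feedback, so $D_\cdot x(t)$ is bounded away from zero on a deterministic interval $(t-\delta,t)$, and you conclude by positive definiteness of the $\mathcal{H}$-inner product on functions. That soft step is legitimate but is the one place you should be explicit: $\mathcal{H}$ is a completion containing distributional elements, so positive definiteness must be invoked on a function subspace; your $D_\cdot x(t)$ does lie pathwise in $L^{1/H}([0,T]) \subset \mathcal{H}$ precisely because $\alpha > 1-H$, and on $L^{1/H}$ the $\mathcal{H}$-norm vanishes only for the a.e.-zero function (the Fourier transform of the Riesz kernel $|u-v|^{2H-2}$ is positive; equivalently $K^*$ acts on functions as a weighted right-sided fractional integral of order $H-\tfrac12$, which is injective on $L^1$), so this should be isolated as a lemma rather than a parenthetical. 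Also, continuity of $r \mapsto D_r x(t)$ is neither needed nor immediately available; measurability plus the a.e. lower bound on $(t-\delta,t)$ already gives non-vanishing on a set of positive measure. Finally, note what the paper's quantitative route buys that yours does not: the uniform bound \eqref{eq.DxPositive} is recycled in the proofs of Theorem \ref{thm.ExactDensitySmooth} (which needs negative moments of $\|Dx(t)\|_{\mathcal{H}}$) and Theorem \ref{thm.DensityConvergence} (via the hypothesis $\|DF_\infty\|_{\mathcal{H}} \ge c$ a.s. in Proposition \ref{prop.D12}), whereas your argument delivers only a.s. positivity --- sufficient for the present theorem, but not reusable downstream without upgrading.
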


\begin{theorem} \label{thm.ExactDensitySmooth}
Let $t \in (0,T]$ and $f \in C_{b}^{\infty}$. Then the exact solution $x(t)$ admits an infinitely differentiable density function.
\end{theorem}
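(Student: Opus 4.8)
The plan is to invoke the standard Malliavin smoothness criterion (see, e.g., Theorem 2.1.4 of \cite{Nualart2006}): a random variable $F\in\mathbb{D}^{\infty}$ whose Malliavin variance $\gamma_F:=\|DF\|_{\mathcal{H}}^{2}$ satisfies $\gamma_F^{-1}\in L^{p}(\Omega;\mathbb{R})$ for every $p\ge 1$ admits an infinitely differentiable density. Thus the proof splits into two parts: (i) showing $x(t)\in\mathbb{D}^{\infty}$, and (ii) establishing the negative-moment bound $\mathbb{E}[\|Dx(t)\|_{\mathcal{H}}^{-2p}]<\infty$ for all $p\ge1$. Part (ii) strengthens the almost-sure nondegeneracy $\|Dx(t)\|_{\mathcal{H}}>0$ that already underlies Theorem \ref{thm.ExactDensityExis}, and is where the real work lies.

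For part (i), I would differentiate Eq.\ \eqref{eq.GLE} in the Malliavin sense. Because the stochastic term is a deterministic Wiener integral, its derivative is the kernel itself, and the first Malliavin derivative $Y_r(u):=D_r x(u)$ ($u\ge r$) solves the linear, weakly singular Volterra equation
\begin{equation*}
Y_r(u)=\frac{\sigma}{\Gamma(\alpha)}(u-r)^{\alpha-1}+\frac{1}{\Gamma(\alpha)}\int_r^u (u-s)^{\alpha-1}f'(x(s))\,Y_r(s)\,\mathrm ds .
\end{equation*}
Iterating, each $D^{k}x(t)$ solves an analogous linear Volterra equation whose inhomogeneity is built from lower-order derivatives and the terms $f^{(j)}(x(\cdot))$, all of which are bounded since $f\in C_b^{\infty}$. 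Combining the boundedness of the derivatives of $f$ with a singular Gr\"onwall lemma for weakly singular kernels and standard $L^{p}(\Omega)$ moment estimates then yields $\sup_{t\in[0,T]}\mathbb{E}\big[\|D^{k}x(t)\|_{\mathcal{H}^{\otimes k}}^{p}\big]<\infty$ for all $k\ge1$, $p\ge1$, which gives $x(t)\in\mathbb{D}^{\infty}$. This part is routine given the auxiliary lemmas of Section \ref{sec3}.

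Part (ii) is the main obstacle. The guiding observation is that, solving the linear equation above via its resolvent kernel $E(t,s)$ (bounded in terms of $\|f'\|_{\infty}$, $T$, $\alpha$), one obtains
\begin{equation*}
D_r x(t)=\frac{\sigma}{\Gamma(\alpha)}(t-r)^{\alpha-1}+\frac{\sigma}{\Gamma(\alpha)}\int_r^t E(t,s)(s-r)^{\alpha-1}\,\mathrm ds,
\end{equation*}
so that the leading term is the \emph{deterministic} singular function $g_t(r):=\frac{\sigma}{\Gamma(\alpha)}(t-r)^{\alpha-1}\mathbf 1_{\{r<t\}}$, which is nonzero since $\sigma\neq0$, while the remainder is pointwise dominated by $C(t-r)^{\alpha}$ and is therefore strictly less singular at $r=t$. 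The delicate point is that for $H\in(1/2,1)$ the norm $\|\cdot\|_{\mathcal{H}}$ is \emph{weaker} than the $L^{2}$-norm (indeed $L^{2}([0,T])\hookrightarrow\mathcal{H}$), so one cannot bound $\|Dx(t)\|_{\mathcal{H}}$ below by a pointwise or $L^2$ estimate; instead one must exploit the explicit kernel $|u-v|^{2H-2}$ and the near-diagonal behavior at $r=t$. Concretely, I would write $Dx(t)=g_t+\rho_t$ and, working either through the isometry $\|\cdot\|_{\mathcal{H}}=\|K^{*}\cdot\|_{L^{2}([0,T])}$ or by testing against a function concentrated near $t$, show that the singular contribution of $g_t$ dominates $\rho_t$ on a short terminal interval $[t-\varepsilon,t]$, yielding a strictly positive (deterministic) lower bound $\|Dx(t)\|_{\mathcal{H}}\ge c(t)>0$; failing a uniform bound, a localization argument still delivers the required negative-moment estimate.

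Finally, combining $x(t)\in\mathbb{D}^{\infty}$ with $\mathbb{E}[\|Dx(t)\|_{\mathcal{H}}^{-2p}]<\infty$ for all $p$, the criterion of \cite{Nualart2006} gives that $x(t)$ possesses a $C^{\infty}$ density for every $t\in(0,T]$. I expect the control of the resolvent remainder $\rho_t$ in the $\mathcal{H}$-norm, together with the quantitative lower bound on $\|g_t\|_{\mathcal{H}}$, to be the technically hardest steps, precisely because the $\mathcal{H}$-geometry for $H>1/2$ precludes naive pointwise-to-norm comparisons.
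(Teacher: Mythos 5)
Your overall skeleton --- Theorem 2.1.4 of \cite{Nualart2006} applied to $x(t)$, requiring (i) $x(t)\in\mathbb{D}^{\infty}$ and (ii) negative moments of $\|Dx(t)\|_{\mathcal H}$ --- is also the paper's skeleton, and your part (ii) is essentially the paper's argument: the paper establishes, already in the proof of Theorem \ref{thm.ExactDensityExis}, the \emph{deterministic} lower bound $\|Dx(t)\|^2_{\mathcal H}\ge C_0>0$ in \eqref{eq.DxPositive} by exactly the route you sketch --- pass through the isometry $K^{*}$, restrict to a terminal window $[t-\epsilon,t]$, use $(a+b)^2\ge\tfrac12 a^2-b^2$ so that the noise part $(t-r)^{\alpha-1}$ contributes of order $\epsilon^{2\alpha+2H-2}$ while the drift remainder contributes only $O(\epsilon^{4\alpha+2H-2})$, then choose $\epsilon$ small. (A minor quantitative slip: the remainder is dominated by $C(t-r)^{2\alpha-1}$ via \eqref{eq.Drxt}, not $C(t-r)^{\alpha}$, since the resolvent kernel retains a weak singularity; this changes nothing, as $2\alpha-1>\alpha-1$.) Consequently the negative-moment condition is trivial, and part (ii), which you label the main obstacle, is in the paper a one-line citation of \eqref{eq.DxPositive}; the substance of the paper's proof lies in part (i), which you dismiss as routine.

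And that is where your proposal has a genuine gap: circularity. You propose to obtain $x(t)\in\mathbb{D}^\infty$ by ``iterating'' the Malliavin derivative on Eq.\ \eqref{eq.GLE}, i.e., writing the linear Volterra equation satisfied by $D^{k}x(t)$ and running a singular Gr\"onwall estimate. But to write that equation for $k\ge2$ you must already know that $x(t)\in\mathbb{D}^{k,p}$ --- Lemma \ref{lem.Malliavin} only provides $k=1$ --- and an a priori bound on $\|D^{k}x(t)\|$, derived under the assumption that $D^{k}x(t)$ exists and satisfies the differentiated equation, does not by itself prove membership in $\mathbb{D}^{k,p}$. The paper closes this gap with an approximation argument: it introduces the Picard iterates $x^{(n)}(t)$ of \eqref{eq.Picard}, shows inductively via Lemma \ref{Faa} (using $f\in C_b^{\infty}$) that each $x^{(n)}(t)\in\mathbb{D}^{\infty}$, proves the uniform bound \eqref{eq:Dkx} by induction on $k$ with a Gr\"onwall-type lemma, and then invokes Lemma 1.5.3 of \cite{Nualart2006} (closability of $D$ under uniform derivative bounds) together with the $L^{p}(\Omega;\mathbb{R})$-convergence $x^{(n)}(t)\to x(t)$ to transfer $\mathbb{D}^{\infty}$-membership to the limit. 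Your estimates are the right ones, but they must be run on an approximating sequence rather than on $x(t)$ itself; without that limiting step the proof of part (i) is incomplete.
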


For a fixed integer $N \geq 1$, let $\{ t_n := n h, \, n= 0, 1, \cdots, N \}$ be a uniform partition of $[0,T]$ with the time step size $h := T/N$. As introduced in \cite{FangLi2020}, the EM method for Eq.\ \eqref{eq.GLE} can be formulated as
\begin{align}\label{eq.EM}
x_n = x_0 + \frac{1}{\Gamma(\alpha)} \sum_{j=1}^{n} f(x_{j-1}) \int_{t_{j-1}}^{t_j} (t_n-s)^{\alpha-1} \mathrm{d}s + G(t_n), \quad n = 1,2,\cdots,N,
\end{align}
where 
\begin{align}\label{eq.Gt}
G(t) := \frac{\sigma}{\Gamma(\alpha)} \int_0^{t} (t-s)^{\alpha-1} \mathrm{d}W_H(s) \quad \forall\, t\in[0,T].
\end{align}

Then, we are able to state the main results for the EM method \eqref{eq.EM} of Eq.\ \eqref{eq.GLE}. 

\begin{theorem} \label{thm.EulerDensityExis}
Let $n\in\{1,2,\cdots,N\}$ and $f \in C_{b}^{1}$. Then the law of the numerical solution $x_n$ is absolutely continuous with respect to the Lebesgue measure on $\mathbb{R}$.
\end{theorem}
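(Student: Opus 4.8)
The plan is to use the standard Malliavin-calculus criterion for absolute continuity: a random variable $F\in\mathbb{D}^{1,2}$ admits a law that is absolutely continuous with respect to the Lebesgue measure provided $\|DF\|_{\mathcal H}>0$ almost surely (see e.g.\ Theorem 2.1.3 of \cite{Nualart2006}). Thus the proof reduces to two tasks: first, verifying that $x_n\in\mathbb{D}^{1,2}$, and second, showing that the Malliavin derivative $Dx_n$ is nonzero almost surely. Under the assumption $f\in C_b^1$, the drift is globally Lipschitz with bounded derivative, so the membership $x_n\in\mathbb{D}^{1,2}$ (indeed $x_n\in\mathbb{D}^{1,\infty}$) should follow by induction on $n$ from the explicit recursion \eqref{eq.EM}, using that the Gaussian term $G(t_n)$ from \eqref{eq.Gt} lies in $\mathbb{D}^{1,2}$ and that $\mathbb{D}^{1,2}$ is stable under composition with $C_b^1$ functions.

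First I would apply the Malliavin derivative operator $D$ to both sides of \eqref{eq.EM}. Since the quadrature weights $\frac{1}{\Gamma(\alpha)}\int_{t_{j-1}}^{t_j}(t_n-s)^{\alpha-1}\,\mathrm ds$ are deterministic constants, the chain rule gives the recursion
\begin{align*}
D x_n = \frac{1}{\Gamma(\alpha)} \sum_{j=1}^{n} f'(x_{j-1})\, D x_{j-1} \int_{t_{j-1}}^{t_j} (t_n-s)^{\alpha-1}\,\mathrm ds + D G(t_n),
\end{align*}
where, because $G(t_n)$ is a Wiener integral of the deterministic kernel $\frac{\sigma}{\Gamma(\alpha)}(t_n-\cdot)^{\alpha-1}\mathbf 1_{[0,t_n]}$, its Malliavin derivative is the deterministic element $D G(t_n) = \frac{\sigma}{\Gamma(\alpha)}(t_n-\cdot)^{\alpha-1}\mathbf 1_{[0,t_n]}\in\mathcal H$. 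The induction on $n$ simultaneously establishes $x_n\in\mathbb{D}^{1,2}$ and produces this closed recursion for $Dx_n$.

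The heart of the argument is showing $\|Dx_n\|_{\mathcal H}>0$ a.s. Here I would exploit the lower-triangular structure of the recursion: $Dx_n$ equals the deterministic kernel $DG(t_n)$ plus a correction built from $Dx_0,\dots,Dx_{n-1}$, and since $x_0$ is deterministic we have $Dx_0=0$. The key observation is that $Dx_n-DG(t_n)$ depends only on $Dx_0,\dots,Dx_{n-1}$, so at the last time increment the derivative $Dx_n$ picks up, via $DG(t_n)$, the full nonzero deterministic kernel on $(t_{n-1},t_n)$ that the earlier terms cannot cancel; more precisely, I would project onto the component of $\mathcal H$ supported on the final subinterval, where only $DG(t_n)$ contributes and equals a nonzero deterministic function. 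Concretely, since $(t_n-s)^{\alpha-1}$ is strictly positive on $(t_{n-1},t_n)$ and $\sigma\neq0$, this projection is a nonzero deterministic element of $\mathcal H$, forcing $\|Dx_n\|_{\mathcal H}>0$ almost surely. The main obstacle I anticipate is making this nondegeneracy argument rigorous in the abstract Hilbert space $\mathcal H$ rather than in an $L^2$-norm, because $\mathcal H$ is defined through the covariance of the fBm and its elements are not genuine functions; I would circumvent this by passing through the isometry $K^\ast:\mathcal H\to L^2([0,T];\mathbb R)$ introduced in the preliminaries, computing $K^\ast(Dx_n)$ explicitly and showing it is a nonzero $L^2$ function, which by the isometry yields $\|Dx_n\|_{\mathcal H}=\|K^\ast(Dx_n)\|_{L^2}>0$ a.s.
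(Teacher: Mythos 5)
Your proposal is correct and follows essentially the same route as the paper: induction to get $x_n\in\mathbb{D}^{1,\infty}$, the chain-rule recursion for $Dx_n$, and then nondegeneracy obtained by passing through the isometry $K^\ast$ and observing that for $s$ in the final subinterval only the deterministic kernel $\frac{\sigma}{\Gamma(\alpha)}(t_n-\cdot)^{\alpha-1}$ contributes to $(K^\ast Dx_n)(s)$, since each $Dx_{j-1}$ vanishes on $[t_{j-1},T]$. The paper implements exactly this by restricting the $L^2$-norm of $K^\ast Dx_n$ to $[t_n-\tfrac{h}{2},t_n]$ and computing the explicit deterministic lower bound $\tilde{C}h^{2\alpha+2H-2}>0$, which is the quantitative version of the last step you sketch.
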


\begin{theorem} \label{thm.EulerDensitySmooth}
Let $n\in\{1,2,\cdots,N\}$ and $f \in C_{b}^{\infty}$. Then the numerical solution $x_n$ admits an infinitely differentiable density function.
\end{theorem}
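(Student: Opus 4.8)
\emph{Proof proposal.} The plan is to verify the two standard hypotheses of the Malliavin criterion for smoothness of densities (see, e.g., Theorem 2.1.4 and Proposition 2.1.5 of \cite{Nualart2006}): that $x_n \in \mathbb{D}^{\infty}$ and that the scalar Malliavin matrix $\|Dx_n\|_{\mathcal{H}}^2$ admits negative moments of every order. Granting these, $x_n$ possesses a $C^{\infty}$ density and the proof is complete. I expect the negative-moment estimate, and in particular the strict positivity of a deterministic lower bound for $\|Dx_n\|_{\mathcal{H}}$, to be the main obstacle; note that this lower bound already refines the a.s.\ nonvanishing needed for the existence result in Theorem \ref{thm.EulerDensityExis}.

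First I would show $x_n \in \mathbb{D}^{\infty}$ by induction on $n$. Writing $\psi_k := \frac{\sigma}{\Gamma(\alpha)}(t_k - \cdot)^{\alpha-1}\mathbf{1}_{[0,t_k]}$, the noise term is $G(t_k) = W_H(\psi_k)$, an element of the first Wiener chaos, hence in $\mathbb{D}^{\infty}$ with $DG(t_k) = \psi_k$ deterministic and $D^2 G(t_k) = 0$; here the constraint $\alpha > 1-H$ guarantees $\psi_k \in \mathcal{H}$. Since $x_0$ is deterministic, the base case $x_1 = x_0 + a_{1,1} f(x_0) + G(t_1)$ lies in $\mathbb{D}^{\infty}$. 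Assuming $x_1,\dots,x_{n-1}\in\mathbb{D}^{\infty}$, the hypothesis $f \in C_b^{\infty}$ and the stability of $\mathbb{D}^{\infty}$ under composition with smooth functions having bounded derivatives give $f(x_{j-1}) \in \mathbb{D}^{\infty}$ for $j \le n$; as $\mathbb{D}^{\infty}$ is a vector space and the coefficients of the drift terms in \eqref{eq.EM} are deterministic, it follows that $x_n \in \mathbb{D}^{\infty}$.

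Next I would apply $D$ to \eqref{eq.EM}. Setting $a_{n,j} := \frac{1}{\Gamma(\alpha)}\int_{t_{j-1}}^{t_j}(t_n-s)^{\alpha-1}\mathrm{d}s > 0$ and using $DG(t_n)=\psi_n$ together with the chain rule, one obtains the linear recursion
\begin{align*}
Dx_n = \psi_n + \sum_{j=1}^{n} a_{n,j}\, f'(x_{j-1})\, Dx_{j-1}, \qquad Dx_0 = 0.
\end{align*}
Because each $x_{j-1}$ is built from the noise only up to time $t_{j-1}$, an easy induction shows $Dx_{j-1} \in V_{j-1} := \mathrm{span}\{\psi_1,\dots,\psi_{j-1}\}$, so every summand above lies in $V_{n-1}$ with bounded random coefficients (recall $f'$ is bounded). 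Hence, for a.e.\ $\omega$,
\begin{align*}
Dx_n(\omega) \in \psi_n + V_{n-1}.
\end{align*}

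The key estimate then follows from a Hilbert-space projection argument: for any $v \in \psi_n + V_{n-1}$ one has $\|v\|_{\mathcal{H}} \ge \mathrm{dist}_{\mathcal{H}}(\psi_n, V_{n-1}) =: d_n$, whence $\|Dx_n\|_{\mathcal{H}} \ge d_n$ a.s. The hard part will be to prove $d_n > 0$, i.e.\ that $\psi_n \notin V_{n-1}$, equivalently that $\psi_1,\dots,\psi_n$ are linearly independent in $\mathcal{H}$. I would establish this from the support structure of the $\psi_k$ — only $\psi_n$ is nonzero on $[t_{n-1},t_n]$ — combined with the fact that for $H\in(1/2,1)$ the kernel $|u-v|^{2H-2}$ is strictly positive definite, so that under the isometry $\varphi \mapsto W_H(\varphi)$ no nontrivial function is $\mathcal{H}$-null; equivalently, the covariance matrix of the Gaussian vector $(G(t_1),\dots,G(t_n))$ is nondegenerate. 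Since $d_n$ is a deterministic positive constant, the bound $\|Dx_n\|_{\mathcal{H}}\ge d_n$ yields $\mathbb{E}\big[\|Dx_n\|_{\mathcal{H}}^{-2p}\big] \le d_n^{-2p} < \infty$ for every $p \ge 1$. Together with $x_n \in \mathbb{D}^{\infty}$, the Malliavin criterion then delivers the infinitely differentiable density, which completes the argument.
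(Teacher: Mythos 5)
Your proposal is correct and rests on the same two pillars as the paper's proof: $x_n\in\mathbb{D}^{\infty}$ by induction on $n$ (your composition-stability step is exactly the paper's invocation of Lemma \ref{Faa}), and an almost-sure \emph{deterministic} lower bound on $\|Dx_n\|_{\mathcal{H}}$ fed into the smoothness criterion of Theorem 2.1.4 of \cite{Nualart2006}. Where you genuinely diverge is the nondegeneracy argument. The paper computes: writing $\|Dx_n\|_{\mathcal{H}}=\|K^{*}Dx_n\|_{L^2([0,T];\mathbb{R})}$ and restricting the outer integral to $s\in(t_n-\tfrac{h}{2},t_n)$, every drift contribution $f'(x_{j-1})D_rx_{j-1}\mathbf{1}_{[0,t_{j-1})}(r)$ vanishes identically there (the kernel forces $r>s>t_{n-1}\geq t_{j-1}$), so only the pure noise kernel survives and an explicit estimate gives the quantitative bound \eqref{eq.DxnPositive}, namely $\|Dx_n\|_{\mathcal{H}}^2\geq\tilde{C}h^{2\alpha+2H-2}$. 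You instead observe $Dx_n\in\psi_n+V_{n-1}$ pointwise in $\omega$ and use the projection bound $\|Dx_n\|_{\mathcal{H}}\geq\mathrm{dist}_{\mathcal{H}}(\psi_n,V_{n-1})=d_n$, reducing everything to linear independence of $\psi_1,\dots,\psi_n$ in $\mathcal{H}$. This is more modular---it would apply verbatim to any scheme whose Malliavin derivative is a deterministic fresh-noise direction plus an element of the previously generated subspace---but it defers the analytic content to the claim that no nontrivial function has vanishing $\mathcal{H}$-norm (strict positive definiteness of $|u-v|^{2H-2}$ acting on functions), which you only sketch. That claim is true but not free: it needs either the Fourier-domain representation of the $\mathcal{H}$-inner product, or, staying inside the paper's toolkit, the operator $K^{*}$: if $\sum_k c_k\psi_k=0$ in $\mathcal{H}$, then $K^{*}\big(\sum_k c_k\psi_k\big)=0$ in $L^2$, while for $s\in(t_{n-1},t_n)$ only the $k=n$ term contributes and its integrand $(t_n-r)^{\alpha-1}\,\partial_r\Phi_H(r,s)$ is strictly positive, forcing $c_n=0$; descending through the intervals then gives $c_{n-1}=\dots=c_1=0$. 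Note that this last computation is the paper's localization argument in disguise, so the two proofs meet at the bottom; what the paper's version buys additionally is an explicit rate $h^{2\alpha+2H-2}$ in the lower bound, while yours buys structural clarity at the cost of one nontrivial fact to be cited or proved.
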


\begin{theorem} \label{thm.DensityConvergence}
If $f \in C_{b}^{2}$, then there exists some positive constant $C$ independent of $h$ such that 
\begin{align*}
\| q_{x(t_n)} - q_{x_n} \|_{L^1(\mathbb{R};\mathbb{R})} \leq C h^{\alpha+H-1} \quad \forall \, n \in \{1,2,\cdots,N\}, 
\end{align*}
where $q_{x(t_n)}$ and $q_{x_n}$ denote the probability density functions of $x(t_n)$ and $x_n$, respectively.
\end{theorem}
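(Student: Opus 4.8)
The plan is to convert the $L^1$ density bound into a total variation estimate and then feed the Malliavin--Sobolev error into Proposition \ref{prop.D12}. The starting point is the classical identity
\[
\|q_{x(t_n)} - q_{x_n}\|_{L^1(\mathbb{R};\mathbb{R})} = 2\, d_{\mathrm{TV}}(x(t_n), x_n),
\]
which is legitimate because both $x(t_n)$ and $x_n$ admit densities by Theorems \ref{thm.ExactDensityExis} and \ref{thm.EulerDensityExis}. It therefore suffices to bound $d_{\mathrm{TV}}(x(t_n),x_n)$ by $Ch^{\alpha+H-1}$ uniformly in $n$. Reading Proposition \ref{prop.D12} as a pairwise estimate and applying it with the reference variable $F_\infty = x(t_n)$ and approximant $x_n$ reduces the whole problem to controlling $\|x_n - x(t_n)\|_{\mathbb{D}^{1,2}}$, i.e.\ the $L^2(\Omega)$ strong error together with the $L^2(\Omega;\mathcal{H})$ error of the Malliavin derivatives.

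First I would check the hypotheses. Under $f \in C_b^2$ the solution $x(t_n)$ lies in $\mathbb{D}^{2,4}$ (indeed in $\mathbb{D}^{2,p}$ for every $p$): differentiating Eq.\ \eqref{eq.GLE} once and twice produces linear Volterra equations whose coefficients $f'(x(\cdot))$ and $f''(x(\cdot))$ are bounded, which is precisely the machinery underlying Theorems \ref{thm.ExactDensityExis}--\ref{thm.ExactDensitySmooth}. The nondegeneracy $\|Dx(t_n)\|_{\mathcal{H}} \geq c(t_n)$ a.s., also supplied there, stems from the deterministic dominant term $D_r x(t_n) = \frac{\sigma}{\Gamma(\alpha)}(t_n - r)^{\alpha-1} + (\text{resolvent correction})$, whose $\mathcal{H}$-norm is comparable to $\mathrm{std}(G(t_n)) \asymp t_n^{\alpha+H-1}$, the correction being a bounded perturbation since $f'$ is bounded. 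Absolute continuity of both laws is provided by Theorems \ref{thm.ExactDensityExis} and \ref{thm.EulerDensityExis}.

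The decisive structural observation is that the stochastic convolution $G(t_n)$ of \eqref{eq.Gt} enters the scheme \eqref{eq.EM} and the exact equation \eqref{eq.GLE} identically, so it cancels in the difference:
\[
x_n - x(t_n) = \frac{1}{\Gamma(\alpha)}\left[\sum_{j=1}^n f(x_{j-1})\int_{t_{j-1}}^{t_j}(t_n-s)^{\alpha-1}\,\mathrm{d}s - \int_0^{t_n}(t_n-s)^{\alpha-1}f(x(s))\,\mathrm{d}s\right],
\]
and, because $DG(t_n)$ cancels too, $D(x_n-x(t_n))$ satisfies the analogous identity with $f$ replaced by $f'(x(\cdot))Dx(\cdot)$. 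Hence both error pieces are \emph{pure drift-discretization remainders}. Estimating each local increment through the H\"older regularity of $x$ and of $Dx$ --- both of exponent $\alpha+H-1$, inherited from $G$ --- summing against the weakly singular kernel via $\int_0^{t_n}(t_n-s)^{\alpha-1}\,\mathrm{d}s = t_n^\alpha/\alpha$, and closing with a weakly singular discrete Gronwall argument, I expect
\[
\|x_n - x(t_n)\|_{L^2(\Omega)} + \|D(x_n - x(t_n))\|_{L^2(\Omega;\mathcal{H})} \leq C\, t_n^{\alpha}\, h^{\alpha+H-1}.
\]

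The hard part is the \emph{uniformity in} $n$: the constant delivered by Proposition \ref{prop.D12} depends on $F_\infty = x(t_n)$ through negative powers of $c(t_n)\asymp t_n^{\alpha+H-1}$, which blow up as $t_n\downarrow 0$ (worst case $n=1$, $t_1=h$), so the proposition cannot be used as a black box. Instead I would invoke its explicit estimate, which reads
\[
d_{\mathrm{TV}}(x(t_n),x_n) \leq \frac{C}{c(t_n)^{2}}\bigl(\|D^2 x(t_n)\|_{L^2} + \|\delta(Dx(t_n))\|_{L^2}\bigr)\|x_n - x(t_n)\|_{L^2(\Omega)} + \frac{C}{c(t_n)}\|D(x_n - x(t_n))\|_{L^2(\Omega;\mathcal{H})},
\]
and then carry out a careful power count using the scalings $\|D^2 x(t_n)\|_{L^2}\asymp t_n^{3\alpha+2H-2}$, $\|\delta(Dx(t_n))\|_{L^2}\asymp t_n^{\alpha+H-1}$ together with the factor $t_n^{\alpha}$ from the discretization error. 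The binding constraint turns out to be $c(t_n)^{-1}t_n^{\alpha}\asymp t_n^{1-H}$, which stays bounded on $[0,T]$ since $H<1$; the remaining terms yield even higher positive powers of $t_n$. This compensation between the small-time degeneracy of the Malliavin matrix and the small-time smallness of the drift error is exactly what produces a constant $C$ independent of both $h$ and $n$, completing the argument.
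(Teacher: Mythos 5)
Your proposal follows the same core route as the paper's proof: rewrite the $L^1$ distance of the densities as a (multiple of the) total variation distance, apply Proposition \ref{prop.D12} with $F_\infty = x(t_n)$, and reduce everything to bounding $\|x(t_n)-x_n\|_{\mathbb{D}^{1,2}}$, whose $L^2(\Omega)$ part comes from \cite{FangLi2020} and whose Malliavin--derivative part is handled exactly as you describe: the stochastic convolution cancels in \eqref{eq.DrXt} minus \eqref{eq.DrXn}, the remaining drift-discretization remainder is split using the H\"older regularity of $x$ and of $Dx$ (i.e.\ \eqref{lem.DxBound}--\eqref{lem.DxRegularity}), and a weakly singular Gr\"onwall inequality closes the estimate, giving $\mathbb{E}\big[\|Dx(t_n)-Dx_n\|_{\mathcal{H}}^2\big]\le Ch^{2\alpha+2H-2}$. (The factor $2$ in your TV identity versus the paper's convention via \cite{HongJinSheng2023} is pure normalization and immaterial.)

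Where you genuinely go beyond the paper is the uniformity analysis, and this is not redundant pedantry. The paper applies Proposition \ref{prop.D12} with an unqualified constant, but the constant produced by that proposition's proof scales like $c^{-2}\big(\|D^2F_\infty\|_{L^2}+\|\delta(DF_\infty)\|_{L^2}\big)$ and $c^{-1}$, and for $F_\infty=x(t_n)$ any a.s.\ lower bound must obey $c(t_n)\le Ct_n^{\alpha+H-1}$ (the proof of \eqref{lem.DxBound} shows $\|Dx(t)\|_{\mathcal{H}}\le Ct^{\alpha+H-1}$ a.s.), which degenerates precisely in the worst case $n=1$, $t_1=h\to 0$. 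Your compensation argument --- trading the blow-up of $c(t_n)^{-1}$ against $t_n$-weighted smallness of the discretization error, with the binding term $c(t_n)^{-1}t_n^{\alpha}\asymp t_n^{1-H}$ --- is what is actually needed for a constant independent of both $h$ and $n$, and the paper's proof is silent on it. The same applies to your check that $x(t_n)\in\mathbb{D}^{2,4}$ under $f\in C_b^2$: Theorem \ref{thm.ExactDensitySmooth} assumes $f\in C_b^\infty$, so this hypothesis of the proposition is not literally covered by the paper's stated results either. Two caveats on your sketch. First, the refined bounds are asserted ("I expect") rather than derived; they do hold, but for the Malliavin-derivative error the bound $\|D(x(t_n)-x_n)\|_{L^2(\Omega;\mathcal{H})}\le Ct_n^{\alpha}h^{\alpha+H-1}$ requires keeping the pointwise decay $\|Dx(s)\|_{\mathcal{H}}\le Cs^{\alpha+H-1}$ (not the uniform bound $\le C$ used in the paper) and estimating the singular time integral via Minkowski's integral inequality; the cruder Cauchy--Schwarz step in the paper's uniform estimate closes your power count only when $\alpha\ge 1/2$. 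Second, your ``decisive structural observation'' (cancellation of $G(t_n)$ and of $DG(t_n)$) is exactly the starting point of the paper's computation, so it is shared rather than new. With these details filled in, your argument is a complete, and in the uniformity respect more careful, version of the paper's proof.
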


\section{Auxiliary lemmas}
\label{sec3}

The following two lemmas are taken from Lemma 4.1 of \cite{ChenCuiHongSheng2023} and Theorem 3.1 of \cite{DaiHongShengZhou2023}, respectively. 

\begin{lemma} \label{Faa}
Let $\psi \in C_b^{\infty}$ and $F \in \mathbb D^{\infty}$. Then $\psi(F) \in \mathbb D^{\infty}$. Moreover, for any integer $k \geq 1$ and $p \geq 1$, 
\begin{equation*}
\| D^{k}(\psi(F)) \|_{L^p(\Omega;\mathcal H^{\otimes k})} 
\leq C(\|D^k F\|_{L^p(\Omega;\mathcal H^{\otimes k})} + \|F\|^k_{\mathbb D^{k-1,pk}} + 1)
\end{equation*}
holds for some $C>0$ depending on $k,p$ and $\psi$. 
\end{lemma}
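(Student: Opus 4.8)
The plan is to prove both assertions at once through a Faà di Bruno-type formula for the iterated Malliavin derivative, from which the membership $\psi(F)\in\mathbb{D}^{\infty}$ and the quantitative bound both follow. The representation I would establish is
\begin{equation*}
D^{k}(\psi(F)) = \sum_{r=1}^{k} \psi^{(r)}(F) \sum_{\substack{p_1+\cdots+p_r=k\\ p_i\geq 1}} c_{p_1,\ldots,p_r}\, \mathrm{Sym}\big( D^{p_1}F \otimes \cdots \otimes D^{p_r}F \big),
\end{equation*}
where the inner sum runs over compositions of $k$ into $r$ positive parts, the $c_{p_1,\ldots,p_r}$ are universal combinatorial constants, and $\mathrm{Sym}$ is the symmetrization on $\mathcal{H}^{\otimes k}$. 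The key structural feature I will exploit is that the value $r=1$ forces $p_1=k$ (this is the single ``top-order'' term $\psi'(F)D^{k}F$), while every term with $r\geq 2$ involves only derivatives of $F$ of order at most $k-1$.

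To justify the formula and the membership I would argue by induction on $k$. The base case $k=1$ is the standard first-order chain rule (Proposition 1.2.3 of \cite{Nualart2006}): since $\psi\in C_b^{\infty}$ has bounded first derivative and $F\in\mathbb{D}^{1,p}$ for every $p$, one obtains $\psi(F)\in\mathbb{D}^{1,p}$ together with $D(\psi(F))=\psi'(F)DF$. For the inductive step I would apply $D$ once more to each summand using the product and chain rules: differentiating a factor $D^{p_i}F$ raises it to $D^{p_i+1}F$, while differentiating $\psi^{(r)}(F)$ produces $\psi^{(r+1)}(F)$ times an extra factor $DF$, so every resulting term is again of the stated form with $k$ replaced by $k+1$. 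Because each $\psi^{(r)}$ is bounded and each $D^{j}F$ belongs to $L^{q}(\Omega;\mathcal{H}^{\otimes j})$ for all $q$, every term lies in $L^{p}(\Omega;\mathcal{H}^{\otimes(k+1)})$; the closedness of $D$ then licenses the term-by-term differentiation and yields $\psi(F)\in\mathbb{D}^{k+1,p}$ for all $p$, hence $\psi(F)\in\mathbb{D}^{\infty}$.

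For the estimate I would take the $L^{p}(\Omega;\mathcal{H}^{\otimes k})$ norm of the formula and use the triangle inequality over the finite sum. Since $|\psi^{(r)}(F)|\leq\|\psi^{(r)}\|_{\infty}$, symmetrization is a contraction, and the cross-norm factorizes as $\|D^{p_1}F\otimes\cdots\otimes D^{p_r}F\|_{\mathcal{H}^{\otimes k}}=\prod_{i=1}^{r}\|D^{p_i}F\|_{\mathcal{H}^{\otimes p_i}}$, a single summand is bounded by a constant times $\mathbb{E}\big[\prod_{i=1}^{r}\|D^{p_i}F\|^{p}_{\mathcal{H}^{\otimes p_i}}\big]^{1/p}$. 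Applying the generalized Hölder inequality with $r$ exponents all equal to $r$ turns this into $\prod_{i=1}^{r}\|D^{p_i}F\|_{L^{pr}(\Omega;\mathcal{H}^{\otimes p_i})}$. The $r=1$ term contributes exactly $\|\psi'\|_{\infty}\|D^{k}F\|_{L^{p}(\Omega;\mathcal{H}^{\otimes k})}$, the first term on the right-hand side. For each term with $r\geq 2$ one has $p_i\leq k-1$, and since the probability measure gives $L^{pk}\hookrightarrow L^{pr}$ for $r\leq k$, each factor is dominated by $\|F\|_{\mathbb{D}^{k-1,pk}}$ and the product by $\|F\|^{r}_{\mathbb{D}^{k-1,pk}}$. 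Finally the elementary inequality $\|F\|^{r}_{\mathbb{D}^{k-1,pk}}\leq\|F\|^{k}_{\mathbb{D}^{k-1,pk}}+1$ valid for $1\leq r\leq k$ absorbs every lower-order term into $C(\|F\|^{k}_{\mathbb{D}^{k-1,pk}}+1)$, which is precisely the source of the ``$+1$'' in the statement; summing the finitely many contributions gives the claim.

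I expect the main obstacle to be the rigorous execution of the inductive step: verifying that each order-$k$ expression is itself Malliavin differentiable and that $D$ may be passed through the sum requires careful use of the first-order chain rule together with the closability of $D$ and the finiteness of all Malliavin--Sobolev norms of $F$. Once the formula is secured, the remaining work is the exponent bookkeeping, ensuring that every Hölder exponent stays at most $pk$ and that every derivative order appearing in the $r\geq 2$ terms stays at most $k-1$, both of which hold because $r\leq k$ and the parts of a composition with $r\geq 2$ summands are strictly less than $k$.
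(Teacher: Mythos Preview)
The paper does not prove this lemma itself; it simply records it as ``taken from Lemma 4.1 of \cite{ChenCuiHongSheng2023}'' and uses it as a black box. Your proposal supplies exactly the standard proof that sits behind that citation: the Fa\`a di Bruno expansion of $D^{k}(\psi(F))$ into a finite sum of terms $\psi^{(r)}(F)\,\mathrm{Sym}(D^{p_1}F\otimes\cdots\otimes D^{p_r}F)$, followed by the observation that the $r=1$ summand is the unique top-order term $\psi'(F)D^kF$ while every $r\ge2$ summand involves only $D^{p_i}F$ with $p_i\le k-1$. The H\"older bookkeeping you outline --- splitting into $r$ equal exponents, using $L^{pk}\hookrightarrow L^{pr}$ on the probability space since $r\le k$, and collapsing $\|F\|_{\mathbb D^{k-1,pk}}^{r}\le \|F\|_{\mathbb D^{k-1,pk}}^{k}+1$ --- is exactly what produces the three terms on the right-hand side, including the ``$+1$''. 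The inductive justification of the formula via the first-order chain rule and the closedness of $D$ is routine given $F\in\mathbb D^{\infty}$ and the boundedness of every $\psi^{(r)}$. In short, your argument is correct and is the expected one; there is nothing to contrast because the paper offers no alternative proof.
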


\begin{lemma} \label{lem.Malliavin}
If $f \in C_{b}^{1}$, then for any $t \in [0,T]$, $x(t) \in \mathbb{D}^{1,\infty}$, and for a.e.\ $r \in [0,T]$,
\begin{align}\label{eq.DrXt}
D_r x(t) = \Big( \frac{1}{\Gamma(\alpha)} \int_{r}^t (t-s)^{\alpha-1 }f^{\prime}(x(s)) D_r x(s) \mathrm{d} s + \frac{\sigma}{\Gamma(\alpha)} (t-r)^{\alpha-1} \Big) \mathbf{1}_{[0,t)}(r).
\end{align}
Moreover, there exists some constant $C = C(\alpha,H,\sigma,T)$ such that for a.e.\ $r\in[0,T]$,
\begin{align}\label{eq.Drxt}
| D_r x(t) | \leq C (t - r)^{\alpha-1} \mathbf{1}_{[0,t)}(r),\quad \mbox{a.s.}
\end{align}
\end{lemma}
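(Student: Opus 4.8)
The plan is to build $x(t)$ by Picard iteration, differentiate the recursion term by term using that the stochastic convolution has an explicit Malliavin derivative, pass to the limit via the closability of $D$ together with uniform Sobolev bounds, and finally extract the pointwise estimate \eqref{eq.Drxt} from a weakly singular Gronwall inequality applied to \eqref{eq.DrXt}. To begin, I would set $x^{(0)}(t)\equiv x_0$ and define
\[
x^{(m+1)}(t) = x_0 + \frac{1}{\Gamma(\alpha)}\int_0^{t}(t-s)^{\alpha-1} f\big(x^{(m)}(s)\big)\,\mathrm{d}s + G(t),
\]
with $G$ as in \eqref{eq.Gt}. Since $G(t)=\delta\big(\tfrac{\sigma}{\Gamma(\alpha)}(t-\cdot)^{\alpha-1}\mathbf{1}_{[0,t)}\big)$ is a Wiener integral of a deterministic kernel, it lies in $\mathbb{D}^{\infty}$ with $D_r G(t)=\tfrac{\sigma}{\Gamma(\alpha)}(t-r)^{\alpha-1}\mathbf{1}_{[0,t)}(r)$, and this term will supply the leading singularity at every stage.

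Next, arguing inductively with $f\in C_b^1$ and the chain rule for the Malliavin derivative (as invoked in the proof of Proposition \ref{prop.D12}), I would show that each $x^{(m)}(t)\in\mathbb{D}^{1,\infty}$ and that differentiating the recursion yields
\[
D_r x^{(m+1)}(t) = \Big(\frac{1}{\Gamma(\alpha)}\int_{r}^{t}(t-s)^{\alpha-1} f'\big(x^{(m)}(s)\big) D_r x^{(m)}(s)\,\mathrm{d}s + \frac{\sigma}{\Gamma(\alpha)}(t-r)^{\alpha-1}\Big)\mathbf{1}_{[0,t)}(r),
\]
where the lower limit becomes $r$ because causality of the Volterra equation forces $D_r x^{(m)}(s)=0$ for $s<r$. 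I would then establish uniform-in-$m$ bounds for $\|x^{(m)}(t)\|_{\mathbb{D}^{1,p}}$ for every $p\ge 2$: the $L^p(\Omega)$-bound is standard from the Lipschitz property of $f$ and a fractional Gronwall argument, while for the derivative, taking $\mathcal{H}$-norms, raising to the $p$-th power, and using $\|f'\|_\infty<\infty$ produces an inequality of the form $\mathbb{E}\|Dx^{(m+1)}(t)\|_{\mathcal{H}}^p \le C_1 + C_2\int_0^t (t-s)^{\gamma}\,\mathbb{E}\|Dx^{(m)}(s)\|_{\mathcal{H}}^p\,\mathrm{d}s$ with an exponent $\gamma>-1$ arising from the $\mathcal{H}$-norm of the singular kernel; here the standing assumptions $H\in(1/2,1)$ and $\alpha\in(1-H,1)$ are exactly what guarantee integrability. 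A weakly singular Gronwall inequality then gives a bound independent of $m$.

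With the iterates converging to $x(t)$ in $L^p(\Omega)$ and uniformly bounded in $\mathbb{D}^{1,p}$, the closability of $D$ forces $x(t)\in\mathbb{D}^{1,p}$ for all $p$, hence $x(t)\in\mathbb{D}^{1,\infty}$, and $Dx^{(m)}(t)$ converges weakly in $L^p(\Omega;\mathcal{H})$ to $Dx(t)$, which therefore satisfies \eqref{eq.DrXt}. Finally, to obtain \eqref{eq.Drxt}, I would fix $r$ and read \eqref{eq.DrXt} as a linear weakly singular Volterra equation in $t\mapsto D_r x(t)$; taking absolute values and using $\|f'\|_\infty<\infty$ gives
\[
|D_r x(t)| \le \frac{|\sigma|}{\Gamma(\alpha)}(t-r)^{\alpha-1} + \frac{\|f'\|_\infty}{\Gamma(\alpha)}\int_r^t (t-s)^{\alpha-1}|D_r x(s)|\,\mathrm{d}s.
\]
Applying the Henry--Gronwall inequality for weakly singular kernels (whose iterated kernels sum to a Mittag-Leffler-type function) yields $|D_r x(t)|\le C(t-r)^{\alpha-1}$, since the forcing term already carries the singularity $(t-r)^{\alpha-1}$ while the correction terms generated by iteration are of strictly higher regularity and bounded on $[0,T]$. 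The main obstacle throughout is the weak singularity of the kernel: ordinary Gronwall does not apply, so both the uniform $\mathbb{D}^{1,p}$ bounds and the final estimate rely on the fractional/Henry version, and one must verify that the relevant kernel powers stay integrable under $H\in(1/2,1)$, $\alpha\in(1-H,1)$ and that the singularity order $(t-r)^{\alpha-1}$ is preserved exactly rather than degraded.
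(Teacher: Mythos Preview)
Your approach is correct and matches what the paper relies on. Note that the paper does not actually prove Lemma~\ref{lem.Malliavin} in-line; it is quoted from Theorem~3.1 of \cite{DaiHongShengZhou2023}, and the surrounding arguments in the paper (in particular the proof of Theorem~\ref{thm.ExactDensitySmooth}) use exactly the same machinery you describe: Picard iterates \eqref{eq.Picard}, the chain rule, uniform $\mathbb{D}^{1,p}$ bounds obtained via a weakly singular Gronwall inequality (Lemma~3.1 of \cite{DaiHongShengZhou2023}), and closability of $D$ to pass to the limit. Your derivation of \eqref{eq.Drxt} from \eqref{eq.DrXt} via the Henry--Gronwall inequality is likewise the intended route.
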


\begin{remark}
By revisiting the proof of Theorem 3.1 of \cite{DaiHongShengZhou2023}, the condition $f \in C_{b}^{1}$ is sufficient to the conclusions of Lemma \ref{lem.Malliavin}. 
\end{remark}

The subsequent lemma is extremely important for the proof of Theorem \ref{thm.DensityConvergence}. 

\begin{lemma} 
If $f \in C_{b}^{1}$, then there exists some positive constant $C = C(\alpha,H,\sigma,T)$ such that 
\begin{align} \label{lem.DxBound}
\sup_{0 \leq t \leq T} \| D x(t) \|_{\mathcal{H}} 
&\leq C,\quad \mbox{a.s.}, 
\end{align} 
and for any $0 < s < t \leq T$, 
\begin{align} \label{lem.DxRegularity} 
\| D x(t) - D x(s) \|_{\mathcal{H}} 
\leq C t^{H-\frac{1}{2}} s^{\frac{1}{2}-H} (t-s)^{\alpha+H-1},\quad \mbox{a.s.} 
\end{align} 
\end{lemma}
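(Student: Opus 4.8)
The plan is to reduce every $\mathcal{H}$-norm appearing in the statement to a tractable scalar integral and then feed in the pointwise control of $D_r x(t)$ already furnished by Lemma~\ref{lem.Malliavin}. Two representations of the norm are at hand. The first is the Hardy--Littlewood--Sobolev embedding $L^{1/H}([0,T];\mathbb{R}) \hookrightarrow \mathcal{H}$, which is available precisely because $H>1/2$ and yields $\|\varphi\|_{\mathcal{H}} \le C_H \|\varphi\|_{L^{1/H}([0,T])}$ for every $\varphi$; it is the cleaner tool for upper bounds. The second is the isometry $\|\varphi\|_{\mathcal{H}} = \|K^*\varphi\|_{L^2([0,T])}$, where $\frac{\partial}{\partial v}\Phi_H(v,r) = c_H\, r^{\frac12-H}(v-r)^{H-\frac32}v^{H-\frac12}$ carries an explicit weight $r^{\frac12-H}$. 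This second representation is what makes the factors $t^{H-\frac12}s^{\frac12-H}$ in \eqref{lem.DxRegularity} arise organically, and I will use it to interpret the stated form.

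For the uniform bound \eqref{lem.DxBound} I would argue directly. By the embedding, $\|Dx(t)\|_{\mathcal{H}} \le C_H \|D_\cdot x(t)\|_{L^{1/H}([0,t))}$, and inserting $|D_r x(t)| \le C (t-r)^{\alpha-1}$ from \eqref{eq.Drxt} gives $\int_0^t |D_r x(t)|^{1/H}\,\mathrm dr \le C \int_0^t (t-r)^{(\alpha-1)/H}\,\mathrm dr$. This integral is finite and bounded uniformly on $[0,T]$ exactly when $(\alpha-1)/H > -1$, i.e.\ when $\alpha > 1-H$, which is the standing assumption; raising to the power $H$ yields the claimed a.s.\ bound, uniform in $t$.

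For the increment estimate \eqref{lem.DxRegularity}, fix $0<s<t\le T$ and set $\Delta(r) := D_r x(t) - D_r x(s)$. Using \eqref{eq.DrXt} I would split according to whether $r \in [0,s)$ or $r \in [s,t)$. On $[s,t)$ only $D_r x(t)$ survives, and the pointwise bound already contributes a term of order $(t-s)^{\alpha+H-1}$ after applying the norm representation. On $[0,s)$ the noise parts combine into the kernel difference $\frac{\sigma}{\Gamma(\alpha)}\big[(t-r)^{\alpha-1}-(s-r)^{\alpha-1}\big]$, whereas the drift parts reduce, after inserting $|f'|\le M$ and $|D_r x(\tau)| \le C(\tau-r)^{\alpha-1}$, to $\int_s^t(t-\tau)^{\alpha-1}(\tau-r)^{\alpha-1}\,\mathrm d\tau$ plus an integral weighted by $(s-\tau)^{\alpha-1}-(t-\tau)^{\alpha-1}$ over $[r,s]$. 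Because of the extra factor $(\tau-r)^{\alpha-1}$ and the additional integration, the drift contributions turn out to be of strictly higher order in $(t-s)$ than the noise term; the kernel difference is therefore the dominant object.

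The main obstacle is the sharp treatment of these weakly singular integrals, above all $\int_0^s\big[(s-r)^{\alpha-1}-(t-r)^{\alpha-1}\big]^{1/H}\,\mathrm dr$. Writing $(s-r)^{\alpha-1}-(t-r)^{\alpha-1} = (1-\alpha)\int_{s-r}^{t-r} w^{-(2-\alpha)}\,\mathrm dw$ and splitting the $r$-range at distance $t-s$ from the singularity separates the near-singularity regime, where the difference behaves like $(s-r)^{\alpha-1}$ and whose convergence forces $\alpha>1-H$, from the far regime, where it behaves like $(t-s)(s-r)^{\alpha-2}$; a short computation shows both regimes contribute at order $(t-s)^{\alpha+H-1}$ after raising to the power $H$. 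This Hardy--Littlewood--Sobolev route thus delivers the clean, weight-free bound $\|Dx(t)-Dx(s)\|_{\mathcal{H}} \le C(t-s)^{\alpha+H-1}$, which in particular implies \eqref{lem.DxRegularity} since $t^{H-\frac12}s^{\frac12-H}\ge 1$ for $t>s$ and $H>\frac12$. To produce the stated prefactor directly one instead carries the same kernel-difference estimates through $K^*$: its $r^{\frac12-H}$ weight, integrated against $v^{H-\frac12}$ over $[r,t]$ and then squared in $L^2(\mathrm dr)$, is exactly what generates the $s^{\frac12-H}$ and $t^{H-\frac12}$ factors. Tracking these weights through the singular integrals, rather than the rate itself, is the delicate bookkeeping at the heart of the proof.
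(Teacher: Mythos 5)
Your proposal is correct, but it follows a genuinely different route from the paper's proof. The paper works exclusively with the isometry $\|\varphi\|_{\mathcal{H}} = \|K^*\varphi\|_{L^2([0,T];\mathbb{R})}$, writing out the kernel $\frac{\partial}{\partial r}\Phi_H(r,u) = c_H u^{\frac12-H}(r-u)^{H-\frac32}r^{H-\frac12}$ and estimating the resulting \emph{weighted} singular integrals for both \eqref{lem.DxBound} and \eqref{lem.DxRegularity}; the prefactor $t^{H-\frac12}s^{\frac12-H}$ in \eqref{lem.DxRegularity} is precisely the trace of the weights $u^{1-2H}$ and $r^{2H-1}$ that this kernel forces into the computation (see the auxiliary estimate \eqref{eq.auxInteEsti}). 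You instead invoke the Hardy--Littlewood--Sobolev embedding $L^{1/H}([0,T];\mathbb{R})\hookrightarrow\mathcal{H}$, which turns everything into unweighted $L^{1/H}$ estimates of kernel differences. Your computations check out: the splitting of $\int_0^s\big[(s-r)^{\alpha-1}-(t-r)^{\alpha-1}\big]^{1/H}\,\mathrm{d}r$ at distance $t-s$ from the singularity gives $(t-s)^{(\alpha+H-1)/H}$ in both regimes (the near part needs $\alpha>1-H$, the far part needs $\alpha<2-H$, which is automatic), and your drift terms come out as $O\big((t-s)^{\alpha}\big)$, matching the paper's bound for its term $\mathcal{I}_1$ and negligible since $\alpha>\alpha+H-1$. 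The net effect is that you obtain the weight-free bound $\|Dx(t)-Dx(s)\|_{\mathcal{H}}\le C(t-s)^{\alpha+H-1}$, which is strictly \emph{stronger} than \eqref{lem.DxRegularity} (the stated form follows because $t^{H-\frac12}s^{\frac12-H}=(t/s)^{H-\frac12}\ge1$), and it would even slightly streamline the proof of Theorem \ref{thm.DensityConvergence}, where the paper must use the observation $t_{j-1}\ge\frac12 t_j$ to absorb the weights it carries. What the paper's $K^*$ route buys in exchange is uniformity of technique: the same explicit kernel representation is what produces the \emph{lower} bounds $\|Dx(t)\|_{\mathcal{H}}^2\ge C_0>0$ in Theorems \ref{thm.ExactDensityExis} and \ref{thm.EulerDensityExis}, where the embedding is of no help (it only bounds $\mathcal{H}$-norms from above), so the paper can reuse one computational scheme throughout rather than introduce a second tool.
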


\begin{proof} 
For any $t \in [0,T]$, it follows from \eqref{eq.Drxt} that 
\begin{align*}
\| D x(t) \|_{\mathcal{H}}^2 &= \| K^{\ast} D x(t) \|_{L^2([0,T];\mathbb{R})}^2 = \int_0^T \Big| \int_u^T D_r x(t) \frac{\partial}{\partial r} \Phi_H (r,u) \rd r \Big|^2 \rd u \\
&= \int_0^T \Big| \int_u^T D_r x(t) c_H u^{\frac{1}{2}-H} (r-u)^{H-\frac{3}{2}} r^{H-\frac{1}{2}} \rd r \Big|^2 \rd u \\
&\leq C t^{2H-1} \int_0^t \Big| \int_u^t (t-r)^{\alpha-1} (r-u)^{H-\frac{3}{2}} \rd r \Big|^2 \, u^{1-2H} \rd u \\
&\leq C t^{2H-1} \int_0^t (t-u)^{2\alpha+2H-3} u^{1-2H} \rd u \\
&\leq C t^{2\alpha+2H-2} \leq C, 
\end{align*}
which shows that \eqref{lem.DxBound} holds. We next prove \eqref{lem.DxRegularity}. Set $0 < s < t \leq T$ and note that 
\begin{align}
&\quad\ \int_0^s \Big| (t-u)^{\alpha+H-\frac{3}{2}} - (s-u)^{\alpha+H-\frac{3}{2}} \Big|^2 u^{1-2H} \rd u \notag\\ 
&\leq C \int_0^{\frac{s}{2}} \Big| \int_s^t (v-u)^{\alpha+H-\frac{5}{2}} \rd v \Big|^2 u^{1-2H} \rd u 
+ \int_{\frac{s}{2}}^s \Big| (t-u)^{\alpha+H-\frac{3}{2}} - (s-u)^{\alpha+H-\frac{3}{2}} \Big|^2 u^{1-2H} \rd u \notag\\
&\leq C \int_0^{\frac{s}{2}} (s-\frac{s}{2})^{-1} \Big| \int_s^t (v-u)^{\alpha+H-2} \rd v \Big|^2 u^{1-2H} \rd u \notag\\
&\quad\ + C s^{1-2H} \int_0^s \Big| (t-u)^{\alpha+H-\frac{3}{2}} - (s-u)^{\alpha+H-\frac{3}{2}} \Big|^2 \rd u \notag\\ 
&\leq C s^{1-2H} (t-s)^{2\alpha+2H-2}. \label{eq.auxInteEsti}
\end{align} 
It follows from \eqref{eq.DrXt} that 
\begin{align*}
&\quad\ \| D x(t) - D x(s) \|_{\mathcal{H}} \\
&\leq \frac{1}{\Gamma(\alpha)} \Big\| \int_0^t (t-u)^{\alpha-1} f^{\prime}(x(u)) D x(u) \mathrm{d} u - \int_0^s (s-u)^{\alpha-1} f^{\prime}(x(u)) D x(u) \mathrm{d} u \Big\|_{\mathcal{H}} \\
&\quad\ + \frac{\sigma}{\Gamma(\alpha)} \Big\| (t-\cdot)^{\alpha-1} \mathbf{1}_{[0,t)}(\cdot) - (s-\cdot)^{\alpha-1} \mathbf{1}_{[0,s)}(\cdot) \Big\|_{\mathcal{H}} \\
&=: \frac{1}{\Gamma(\alpha)} \cI_1 + \frac{\sigma}{\Gamma(\alpha)} \cI_2. 
\end{align*} 
Firstly, by $f \in C_{b}^{1}$ and \eqref{lem.DxBound}, 
\begin{align*}
\cI_1
&\leq \Big\| \int_0^s \big( (t-u)^{\alpha-1} - (s-u)^{\alpha-1} \big) f^{\prime}(x(u)) D x(u) \mathrm{d} u \Big\|_{\mathcal{H}} \\
&\quad\ + \Big\| \int_s^t (t-u)^{\alpha-1 }f^{\prime}(x(u)) D x(u) \mathrm{d} u \Big\|_{\mathcal{H}} \\ 
&\leq C \int_0^s \big( (s-u)^{\alpha-1} - (t-u)^{\alpha-1} \big) \| D x(u) \|_{\mathcal{H}} \mathrm{d} u 
 + C \int_s^t (t-u)^{\alpha-1 } \| D x(u) \|_{\mathcal{H}} \mathrm{d} u \\ 
&\leq C (t-s)^{\alpha}. 
\end{align*}
Secondly, using \eqref{eq.auxInteEsti} shows 
\begin{align*}
\cI_2^2 
&= \int_0^T \Big| \int_u^T \Big( (t-r)^{\alpha-1} \mathbf{1}_{[0,t)}(r) - (s-r)^{\alpha-1} \mathbf{1}_{[0,s)}(r) \Big) c_H u^{\frac{1}{2}-H} (r-u)^{H-\frac{3}{2}} r^{H-\frac{1}{2}} \rd r \Big|^2 \rd u \\
&\leq c_H^2 t^{2H-1} \int_0^t \Big| \int_u^t \Big( (t-r)^{\alpha-1} \mathbf{1}_{[0,t)}(r) - (s-r)^{\alpha-1} \mathbf{1}_{[0,s)}(r) \Big) (r-u)^{H-\frac{3}{2}} \rd r \Big|^2 u^{1-2H} \rd u \\
&= c_H^2 t^{2H-1} \int_0^s \Big| \int_u^t (t-r)^{\alpha-1} (r-u)^{H-\frac{3}{2}} \rd r - \int_u^s (s-r)^{\alpha-1} (r-u)^{H-\frac{3}{2}} \rd r \Big|^2 u^{1-2H} \rd u \\
&\quad\ + c_H^2 t^{2H-1} \int_s^t \Big| \int_u^t (t-r)^{\alpha-1} (r-u)^{H-\frac{3}{2}} \rd r \Big|^2 u^{1-2H} \rd u \\
&\leq C t^{2H-1} \int_0^s \Big| (t-u)^{\alpha+H-\frac{3}{2}} - (s-u)^{\alpha+H-\frac{3}{2}} \Big|^2 u^{1-2H} \rd u \\
&\quad\ + C t^{2H-1} \int_s^t (t-u)^{2\alpha+2H-3} u^{1-2H} \rd u \\
&\leq C t^{2H-1} s^{1-2H} (t-s)^{2\alpha+2H-2}. 
\end{align*}
Thus, we obtain 
\begin{align*}
\| D x(t) - D x(s) \|_{\mathcal{H}} 
&\leq C (t-s)^{\alpha} + C t^{H-\frac{1}{2}} s^{\frac{1}{2}-H} (t-s)^{\alpha+H-1} \\
&\leq C t^{H-\frac{1}{2}} s^{\frac{1}{2}-H} (t-s)^{\alpha+H-1}. 
\end{align*} 
The proof is completed. 
\end{proof}

\section{Proofs of Theorems \ref{thm.ExactDensityExis}--\ref{thm.DensityConvergence}}
\label{sec4}

In this section, we provide the detailed proofs for Theorems \ref{thm.ExactDensityExis}--\ref{thm.DensityConvergence}.

\subsection{Proof of Theorem \ref{thm.ExactDensityExis}}

In view of Theorem 2.1.2 of \cite{Nualart2006} and Lemma \ref{lem.Malliavin}, it suffices to show that $\| D x(t) \|_{\mathcal{H}}$ is strictly positive a.s.\ for any $t \in (0,T]$. For any $t \in (0,T]$, we have
\begin{align*}
\| D x(t) \|_{\mathcal{H}}^2 &= \| K^{\ast} D x(t) \|_{L^2([0,T];\mathbb{R})}^2 = \int_0^T | \big( K^{\ast} D x(t) \big) (s) |^2 \rd s \\
&= \int_0^T \Big| \int_s^T D_r x(t) \frac{\partial K_H}{\partial r}(r,s) \rd r \Big|^2 \rd s \\
&= \int_0^T \Big| \int_s^T D_r x(t) c_H s^{\frac{1}{2}-H} (r-s)^{H-\frac{3}{2}} r^{H-\frac{1}{2}} \rd r \Big|^2 \rd s \\
&\geq \int_{t-\epsilon}^t \Big| \int_s^T \Big( \int_r^t (t-u)^{\alpha-1 }f^{\prime}(x(u)) D_r x(u) \mathrm{d} u + \sigma (t-r)^{\alpha-1} \Big) \mathbf{1}_{[0,t)}(r) \\
&\qquad\quad \times \frac{c_H}{\Gamma(\alpha)} s^{\frac{1}{2}-H} (r-s)^{H-\frac{3}{2}} r^{H-\frac{1}{2}} \rd r \Big|^2 \rd s,
\end{align*}
where the positive number $\epsilon \leq \frac{t}{2}$ can be arbitrarily small. Then, using the elementary inequality $(a+b)^2 \geq \frac{1}{2}a^2 - b^2$ (for $a, b \in \mathbb{R}$) obtains
\begin{align*}
&\quad\ \| D x(t) \|_{\mathcal{H}}^2 \\
&\geq \frac{c_H^2 \sigma^2}{2\Gamma^2(\alpha)} \int_{t-\epsilon}^t \Big| \int_s^T (t-r)^{\alpha-1} \mathbf{1}_{[0,t)}(r) s^{\frac{1}{2}-H} (r-s)^{H-\frac{3}{2}} r^{H-\frac{1}{2}} \rd r \Big|^2 \rd s \\
&\quad\ - \frac{c_H^2}{\Gamma^2(\alpha)} \int_{t-\epsilon}^t \Big| \int_s^T \int_r^t (t-u)^{\alpha-1 }f^{\prime}(x(u)) D_r x(u) \mathrm{d} u \mathbf{1}_{[0,t)}(r) s^{\frac{1}{2}-H} (r-s)^{H-\frac{3}{2}} r^{H-\frac{1}{2}} \rd r \Big|^2 \rd s \\
&=: \frac{c_H^2 \sigma^2}{2\Gamma^2(\alpha)} I_1 - \frac{c_H^2}{\Gamma^2(\alpha)} I_2.
\end{align*}
On the one hand, by $H \in (\frac{1}{2},1)$ and $\alpha \in (1-H,1)$, 
\begin{align*}
I_1 &\geq \int_{t-\epsilon}^t \Big| \int_s^t (t-r)^{\alpha-1} s^{\frac{1}{2}-H} (r-s)^{H-\frac{3}{2}} r^{H-\frac{1}{2}} \rd r \Big|^2 \rd s \\
&\geq (t-\epsilon)^{2H-1} \int_{t-\epsilon}^t \Big| \int_s^t (t-r)^{\alpha-1} (r-s)^{H-\frac{3}{2}} \rd r \Big|^2 s^{1-2H} \rd s \\
&\geq C t^{2H-1} \int_{t-\epsilon}^t (t-s)^{2\alpha+2H-3} s^{1-2H} \rd s \\
&\geq C \int_{t-\epsilon}^t (t-s)^{2\alpha+2H-3} \rd s \\
&\geq C \epsilon^{2\alpha+2H-2},
\end{align*}
where the positive constant $C$ is independent of $\epsilon$. On the other hand, using the assumption $f \in C_b^1$ and \eqref{eq.Drxt} yields
\begin{align*}
I_2 &\leq C \int_{t-\epsilon}^t \Big| \int_s^t \int_r^t (t-u)^{\alpha-1 } (u - r)^{\alpha-1} \mathrm{d} u \, s^{\frac{1}{2}-H} (r-s)^{H-\frac{3}{2}} r^{H-\frac{1}{2}} \rd r \Big|^2 \rd s \\
&\leq C \int_{t-\epsilon}^t \Big| \int_s^t (t-r)^{2\alpha-1} s^{\frac{1}{2}-H} (r-s)^{H-\frac{3}{2}} r^{H-\frac{1}{2}} \rd r \Big|^2 \rd s \\
&\leq C t^{2H-1} \int_{t-\epsilon}^t (t-s)^{4\alpha+2H-3} s^{1-2H} \rd s \\
&\leq C t^{2H-1} (t-\epsilon) ^{1-2H}\int_{t-\epsilon}^t (t-s)^{4\alpha+2H-3} \rd s \\
&\leq C \epsilon^{4\alpha+2H-2},
\end{align*}
where the positive constant $C$ is independent of $\epsilon$. Hence, one can conclude that there exist positive constants $C_1$ and $C_2$, which are independent of $\epsilon$, such that
\begin{align*}
\| D x(t) \|_{\mathcal{H}}^2
\geq C_1 \epsilon^{2\alpha+2H-2} - C_2
\epsilon^{4\alpha+2H-2}.
\end{align*}
Finally, one can choose $\epsilon \in (0, (\frac{C_1}{C_2})^{\frac{1}{2\alpha}} \wedge \frac{t}{2})$ such that $\| D x(t) \|_{\mathcal{H}}^2 > 0$. In particular, one can take $\epsilon = \frac{1}{2}(\frac{C_1}{C_2})^{\frac{1}{2\alpha}} \wedge \frac{t}{4}$ to show that there exists a positive constant $C_0$ satisfying
\begin{align} \label{eq.DxPositive}
\| D x(t) \|_{\mathcal{H}}^2 \geq C_0 > 0.
\end{align}
Thus, the proof is completed by Theorem 2.1.2 of \cite{Nualart2006}, Lemma \ref{lem.Malliavin} and \eqref{eq.DxPositive}. 
\hfill$\Box$

\subsection{Proof of Theorem \ref{thm.ExactDensitySmooth}}

In view of Theorem 2.1.4 of \cite{Nualart2006} and \eqref{eq.DxPositive}, it suffices to show that $x(t)\in \mathbb{D}^\infty$ for all $t\in[0,T]$. To this end, consider the following Picard iteration sequence
\begin{align}\label{eq.Picard}
x^{(n+1)}(t) = x_0 + \frac{1}{\Gamma(\alpha)} \int_0^t (t-s)^{\alpha-1} f \big( x^{(n)}(s) \big) \mathrm{d} s + \frac{\sigma}{\Gamma(\alpha)} \int_0^{t} (t-s)^{\alpha-1} \mathrm{d}W_H(s), \ \ n \geq 0
\end{align}
with $x^{(0)}(t) = x_0$ for all $t \in [0,T]$. Since $x^{(0)}(t)$ is deterministic, we have $x^{(0)}(t)\in \mathbb D^\infty$ for any $t\in[0,T]$. Assume by induction that $x^{(n)}(t)\in \mathbb D^\infty$ for any $t\in[0,T]$. Then by $f \in C_b^\infty$ and Lemma \ref{Faa}, we have that $f(x^{(n)}(s))\in \mathbb D^\infty$ for any $s\in[0,T]$, and thus $x^{(n+1)}(t)\in \mathbb D^\infty$. Hence, the above induction argument shows that $x^{(n)}(t)\in \mathbb D^\infty$ for all $n\in\mathbb N$.
It has been shown in Theorem 3.1 of \cite{DaiHongShengZhou2023} that $\{ x^{(n)}(t) \}_{n=0}^{\infty}$ converges to $x(t)$ in $L^p(\Omega;\mathbb{R})$ for any $p \geq 1$.

Based on Lemma 1.5.3 of \cite{Nualart2006}, we only need to show that for any $k\in\mathbb{N}_+$ and $p \geq 1$,
\begin{align}\label{eq:Dkx}
\sup_{n\ge0}\sup_{t\in[0,T]}\|D^kx^{(n)}(t)\|_{L^p(\Omega;\mathcal H^{\otimes k})}\le C,
\end{align}
in order to complete the proof of $x(t)\in \mathbb{D}^\infty$ for all $t\in[0,T]$. Next, we proceed to prove \eqref{eq:Dkx} by an induction argument on $k\in\mathbb N_+$. 
For $k=1$, \eqref{eq:Dkx} has been obtained in (3.9) of \cite{DaiHongShengZhou2023}. Now let $m\ge2$ be an integer and assume by induction that \eqref{eq:Dkx} holds for $k=0,1,\ldots,m-1$ and all $p \geq 1$. 
It suffices to prove that \eqref{eq:Dkx} holds for $k=m$ and all $p \geq 1$. Indeed, for arbitrarily fixed $p \geq 1$,
taking the $m$th ($m\ge2$) Malliavin derivative and then taking the $\|\cdot\|_{L^p(\Omega;\mathcal H^{\otimes m})}$-norm on both sides of \eqref{eq.Picard}, we obtain from Lemma \ref{Faa} that for any $n\in\mathbb N$,
\begin{align*}
&\quad\ \|D^mx^{(n+1)}(t) \|_{L^p(\Omega;\mathcal H^{\otimes m})} \\
&\leq C\int_0^t (t-s)^{\alpha-1} \left(\|D^m x^{(n)}(s) \|_{L^p(\Omega;\mathcal H^{\otimes m})} + C\|x^{(n)}(s)\|_{\mathbb D^{m-1,pm}}^m + C\right)\mathrm{d} s \\
&\leq C\int_0^t (t-s)^{\alpha-1} \|D^m x^{(n)}(s) \|_{L^p(\Omega;\mathcal H^{\otimes m})}\mathrm{d} s + C,
\end{align*}
where the last step is due to the induction assumption that \eqref{eq:Dkx} holds for $k=0,1,\ldots,m-1$ and all $p \geq 1$. Applying further Lemma 3.1 of \cite{DaiHongShengZhou2023} yields that for any $p \geq 1$,
$$\sup_{n\ge0}\sup_{t\in[0,T]}\|D^mx^{(n)}(t) \|_{L^p(\Omega;\mathcal H^{\otimes m})}<\infty,$$
as required. Hence we have proved that $x(t)\in\mathbb D^\infty$ for all $t\in[0,T]$. Finally, recalling Theorem 2.1.4 of \cite{Nualart2006} and \eqref{eq.DxPositive} completes the proof. 
\hfill$\Box$

\subsection{Proofs of Theorems \ref{thm.EulerDensityExis} and \ref{thm.EulerDensitySmooth}}

\underline{\emph{Proof of Theorem \ref{thm.EulerDensityExis}}}. Obviously, $x_0 \in \mathbb{D}^{1,\infty}$. Assume by induction that $x_i \in \mathbb{D}^{1,\infty}$ for $i = 0,1,\cdots,n$. Then, it follows from $f \in C_{b}^{1}$ that $x_{n+1} \in \mathbb{D}^{1,\infty}$. Thus, we have $x_n \in \mathbb{D}^{1,\infty}$ for all integer $n \in \{1,2,\cdots,N\}$. Moreover, the chain rule of the Malliavin derivative indicates that for $r < t_n$,
\begin{align} \label{eq.DrXn}
D_r x_n = \frac{1}{\Gamma(\alpha)} \sum_{j=1}^{n} f'(x_{j-1}) D_r x_{j-1} \mathbf 1_{[0,t_{j-1})}(r) \int_{t_{j-1}}^{t_j} (t_n-s)^{\alpha-1} \mathrm{d}s + \frac{\sigma}{\Gamma(\alpha)} (t_n-r)^{\alpha-1},
\end{align}
and for $r \in [t_n,T]$, $D_r x_n = 0$. Using the assumption $f \in C_{b}^{1}$ yields
\begin{align*}
| D_r x_n | \leq C \Big( \sum_{j=1}^{n} | f'(x_{j-1}) D_r x_{j-1} | \int_{t_{j-1}}^{t_j} (t_n-s)^{\alpha-1} \mathrm{d}s + (t_n-r)^{\alpha-1} \Big) \mathbf 1_{[0,t_n)}(r).
\end{align*}
Thus, the generalized Gr\"onwall inequality gives
\begin{align} \label{eq.DrXnUpper}
| D_r x_n | \leq C (t_n-r)^{\alpha-1} \mathbf 1_{[0,t_n)}(r), \quad \mbox{for } n=0,1,\cdots,N.
\end{align}

In view of Theorem 2.1.2 of \cite{Nualart2006}, it suffices to show that $\| D x_n \|_{\mathcal{H}}$ is strictly positive for any integer $n \in \{1,2,\cdots,N\}$. For any $t \in (0,T]$, we have
\begin{align*}
&\quad\ \| D x_n \|_{\mathcal{H}}^2 = \| K^{\ast} D x_n \|_{L^2([0,T];\mathbb{R})}^2 = \int_0^T | \big( K^{\ast} D x_n \big) (s) |^2 \rd s \\
&= \int_0^T \Big| \int_s^T D_r x_n \frac{\partial K_H}{\partial r}(r,s) \rd r \Big|^2 \rd s = \int_0^T \Big| \int_s^T D_r x_n c_H s^{\frac{1}{2}-H} (r-s)^{H-\frac{3}{2}} r^{H-\frac{1}{2}} \rd r \Big|^2 \rd s \\
&\geq \int_{t_n-\frac{h}{2}}^{t_n} \Big| \int_s^T \Big(
\sum_{j=1}^{n} f'(x_{j-1}) D_r x_{j-1} \mathbf 1_{[0,t_{j-1})}(r) \int_{t_{j-1}}^{t_j} (t_n-u)^{\alpha-1} \mathrm{d}u + \sigma (t_n-r)^{\alpha-1} \mathbf{1}_{[0,t_n)}(r) \Big) \\
&\qquad\qquad \times \frac{c_H}{\Gamma(\alpha)} s^{\frac{1}{2}-H} (r-s)^{H-\frac{3}{2}} r^{H-\frac{1}{2}} \rd r \Big|^2 \rd s \\
&= \frac{c_H^2 \sigma^2}{\Gamma^2(\alpha)} \int_{t_n-\frac{h}{2}}^{t_n} \Big| \int_s^T (t_n-r)^{\alpha-1} \mathbf{1}_{[0,t_n)}(r) s^{\frac{1}{2}-H} (r-s)^{H-\frac{3}{2}} r^{H-\frac{1}{2}} \rd r \Big|^2 \rd s.
\end{align*}
Then, using the assumptions $H \in (\frac{1}{2},1)$ and $\alpha \in (1-H,1)$ reveals that for some $\tilde{C} > 0$, 
\begin{align} \label{eq.DxnPositive}
\| D x_n \|_{\mathcal{H}}^2 
&\geq C \int_{t_n-\frac{h}{2}}^{t_n} \Big| \int_s^{t_n} (t_n-r)^{\alpha-1} s^{\frac{1}{2}-H} (r-s)^{H-\frac{3}{2}} r^{H-\frac{1}{2}} \rd r \Big|^2 \rd s \nonumber\\
&\geq C \int_{t_n-\frac{h}{2}}^{t_n} \Big| \int_s^{t_n} (t_n-r)^{\alpha-1} (r-s)^{H-\frac{3}{2}} \rd r \Big|^2 \rd s \nonumber\\
&= C \int_{t_n-\frac{h}{2}}^{t_n} (t_n-s)^{2\alpha+2H-3} \rd s \geq \tilde{C} h^{2\alpha+2H-2} > 0. 
\end{align}
Hence, the proof is completed by using Theorem 2.1.2 of \cite{Nualart2006}.
\hfill$\Box$

\vskip 0.8em 

\underline{\emph{Proof of Theorem \ref{thm.EulerDensitySmooth}}}. Obviously, $x_0 \in \mathbb{D}^{\infty}$. Assume by induction that $x_1,\ldots, x_n \in \mathbb{D}^{\infty}$. Then, it follows from $f \in C_{b}^{\infty}$ and Lemma \ref{Faa} that $x_{n+1} \in \mathbb{D}^{\infty}$. Thus, the proof is completed by recalling Theorem 2.1.4 of \cite{Nualart2006} and \eqref{eq.DxnPositive}. 
\hfill$\Box$

\subsection{Proof of Theorem \ref{thm.DensityConvergence}}

For any $n \in \{1,2,\cdots,N\}$, according to \eqref{eq.DrXt} and \eqref{eq.DrXn}, one has 
\begin{align*}
\Gamma(\alpha)(D x(t_n) - D x_n) 
&= \sum_{j=1}^{n} \int_{t_{j-1}}^{t_j} (t_n-s)^{\alpha-1} \big( f^{\prime}(x(s)) D x(s) - f'(x_{j-1}) D x_{j-1} \big) \mathrm{d}s \\
&= \sum_{j=1}^{n} \int_{t_{j-1}}^{t_j} (t_n-s)^{\alpha-1} \big( f^{\prime}(x(s)) - f^{\prime}(x(t_{j-1})) \big) D x(s) \mathrm{d}s \\
&\quad\ + \sum_{j=1}^{n} \int_{t_{j-1}}^{t_j} (t_n-s)^{\alpha-1} \big( f^{\prime}(x(t_{j-1})) - f'(x_{j-1}) \big) D x(s) \mathrm{d}s \\
&\quad\ + \sum_{j=1}^{n} \int_{t_{j-1}}^{t_j} (t_n-s)^{\alpha-1} f'(x_{j-1}) \big( D x(s) - D x(t_{j-1}) \big) \mathrm{d}s \\
&\quad\ + \sum_{j=1}^{n} \int_{t_{j-1}}^{t_j} (t_n-s)^{\alpha-1} f'(x_{j-1}) \big( D x(t_{j-1}) - D x_{j-1} \big) \mathrm{d}s. 
\end{align*}
Using $f \in C_{b}^{1}$, \eqref{lem.DxBound} and H\"older's inequality indicates 
\begin{align*} 
\mathbb{E}\big[ \| D x(t_n) - D x_n \|_{\mathcal{H}}^2 \big] 
 &\leq C \sum_{j=1}^{n} \int_{t_{j-1}}^{t_j} (t_n-s)^{\alpha-1} \mathbb{E}\big[ | f^{\prime}(x(s)) - f^{\prime}(x(t_{j-1})) |^2 \big] \mathrm{d}s \\
&\quad\ + C \sum_{j=1}^{n} \int_{t_{j-1}}^{t_j} (t_n-s)^{\alpha-1} \mathbb{E}\big[ | f^{\prime}(x(t_{j-1})) - f'(x_{j-1}) |^2 \big] \mathrm{d}s \\
&\quad\ + C \sum_{j=1}^{n} \int_{t_{j-1}}^{t_j} (t_n-s)^{\alpha-1} \mathbb{E}\big[ \| D x(s) - D x(t_{j-1}) \|_{\mathcal{H}}^2 \big] \mathrm{d}s \\
&\quad\ + C \sum_{j=1}^{n} \int_{t_{j-1}}^{t_j} (t_n-s)^{\alpha-1} \mathbb{E}\big[ \| D x(t_{j-1}) - D x_{j-1} \|_{\mathcal{H}}^2 \big] \mathrm{d}s. 
\end{align*} 
By $f \in C_{b}^{2}$ and \cite[Lemma 2.3 and Proposition 3.1]{FangLi2020}, one can read 
\begin{align*} 
\mathbb{E}\big[ \| D x(t_n) - D x_n \|_{\mathcal{H}}^2 \big] 
 &\leq C h^{2\alpha+2H-2} + C h^{2\alpha+2H-2} + C \int_0^{t_1} (t_n-s)^{\alpha-1} \mathbb{E}\big[\| D x(s) \|_{\mathcal{H}}^2 \big] \mathrm{d}s \\
&\quad\ + C \sum_{j=2}^{n} \int_{t_{j-1}}^{t_j} (t_n-s)^{\alpha-1} \mathbb{E}\big[ \| D x(s) - D x(t_{j-1}) \|_{\mathcal{H}}^2 \big] \mathrm{d}s \\
&\quad\ + C \sum_{j=1}^{n} \int_{t_{j-1}}^{t_j} (t_n-s)^{\alpha-1} \mathbb{E}\big[ \| D x(t_{j-1}) - D x_{j-1} \|_{\mathcal{H}}^2 \big] \mathrm{d}s. 
\end{align*} 
It follows from the proof of \eqref{lem.DxBound} that $\mathbb{E} [\| D x(s) \|_{\mathcal{H}}^2] \leq C h^{2\alpha+2H-2}$ for all $s \in [0,t_1]$, which together with \eqref{lem.DxRegularity} implies 
\begin{align*} 
\mathbb{E}\big[ \| D x(t_n) - D x_n \|_{\mathcal{H}}^2 \big] 
 &\leq C h^{2\alpha+2H-2} + C h^{2\alpha+2H-2} \sum_{j=2}^{n} \int_{t_{j-1}}^{t_j} (t_n-s)^{\alpha-1} s^{2H-1} t_{j-1}^{1-2H} \mathrm{d}s \\
&\quad\ + C \sum_{j=1}^{n} \int_{t_{j-1}}^{t_j} (t_n-s)^{\alpha-1} \mathbb{E}\big[ \| D x(t_{j-1}) - D x_{j-1} \|_{\mathcal{H}}^2 \big] \mathrm{d}s \\
&\leq C h^{2\alpha+2H-2} + C \sum_{j=1}^{n} \int_{t_{j-1}}^{t_j} (t_n-s)^{\alpha-1} \mathbb{E}\big[ \| D x(t_{j-1}) - D x_{j-1} \|_{\mathcal{H}}^2 \big] \mathrm{d}s, 
\end{align*} 
where the last step used the inequality 
\begin{align*}
\sum_{j=2}^{n} \int_{t_{j-1}}^{t_j} (t_n-s)^{\alpha-1} s^{2H-1} t_{j-1}^{1-2H} \mathrm{d}s 
\leq C \int_0^{t_n} (t_n-s)^{\alpha-1} s^{2H-1} s^{1-2H} \mathrm{d}s 
\leq C. 
\end{align*}
Here, the fact $t_{j-1} \geq \frac{1}{2}t_j$ ($j \geq 2$) is used for the first inequality. Then, applying the Gronwall-type inequality (e.g., Lemma 3.1 of \cite{DaiHongSheng2023}) yields
\begin{align*}
\mathbb{E}\big[ \| D x(t_n) - D x_n \|_{\mathcal{H}}^2 \big] 
\leq C h^{2\alpha+2H-2}.
\end{align*}
Finally, using \cite[(6.1)]{HongJinSheng2023}, Proposition \ref{prop.D12} and \cite[Proposition 3.1]{FangLi2020} shows 
\begin{align*}
\| q_{x(t_n)} - q_{x_n} \|_{L^1(\mathbb{R};\mathbb{R})} 
&= d_{\mathrm{TV}} \left( x(t_n), x_n \right) \leq C \| x(t_n) - x_n \|_{\mathbb D^{1,2}} \\
&= C \big( \mathbb{E}\big[ |x(t_n) - x_n|^2 \big] + \mathbb{E}\big[ \| D x(t_n) - D x_n \|_{\mathcal{H}}^2\big] \big)^{\frac{1}{2}} \\
&\leq C h^{\alpha+H-1}.
\end{align*}
The proof is completed. 
\hfill$\Box$

\section*{Acknowledgments}

This work is supported by National Natural Science Foundation of China (No.\ 12201228), China Postdoctoral Science Foundation (No.\ 2022M713313), and the Fundamental Research Funds for the Central Universities (No.\ 3034011102).

\section*{Declaration of competing interest}
The authors declare that they have no conflict of interest.

\section*{Data availability}
Data sharing is not applicable to this article as no datasets were generated or analyzed during the current study.

\section*{CRediT authorship contribution statement}
\textbf{Xinjie Dai:} Writing - original draft, Writing - review \& editing. \textbf{Diancong Jin:} Writing - original draft, Writing - review \& editing.

\bibliographystyle{amsplain}
\bibliography{RefBib}

\end{spacing}

\end{document}